\newtheorem{theorem}{Theorem}[section]
\newtheorem{corollary}[theorem]{Corollary}
\newtheorem{lemma}[theorem]{Lemma}
\newenvironment{proof}[1][Proof]{\noindent\textbf{#1.} }{\ \rule{0.5em}{0.5em}}
\begin{document}
\title{Homogeneous Finsler sphere with constant flag curvature}
\author{Ming Xu\\
\\
School of Mathematical Sciences\\
Capital Normal University\\
Beijing 100048, P. R. China\\
Email: mgmgmgxu@163.com
\\
}
\date{}
\maketitle

\begin{abstract}
We prove that a homogeneous Finsler sphere with constant flag curvature $K\equiv1$ and a prime closed geodesic of length $2\pi$ must be Riemannian. This observation provides the evidence for the non-existence of homogeneous Bryant spheres. It also helps us propose an alternative approach
proving that a geodesic orbit Finsler sphere with $K\equiv1$ must be Randers. Then we discuss the behavior of geodesics on a homogeneous Finsler sphere with $K\equiv1$. We prove that many geodesic properties for homogeneous Randers spheres with $K\equiv1$ can be generalized to the non-Randers case.

\textbf{Mathematics Subject Classification (2000)}:  53C30, 53C60.

\textbf{Key words}: antipodal map, closed geodesic,
constant flag curvature, homogeneous Finsler sphere, homogeneous geodesic, Randers metric
\end{abstract}

\section{Introduction}
The classification of Finsler spheres $(S^n,F)$ with $n>1$ and constant flag curvature $K\equiv1$ is one of the most intriguing open problems in
Finsler geometry. The Riemannian one is unique, i.e. it must be the unit
sphere ${S}^n(1)\subset\mathbb{R}^{n+1}$ with the submanifold metric induced from the Euclidean space, which we will simply call the standard Riemannian metric. The Randers ones are classified by
D. Bao, C. Robles and Z. Shen, which are defined by a navigation process from a standard Riemannian metric and a Killing vector field \cite{BRS}.
The affects of Killing navigation on the geometry of Finsler manifolds are well understood \cite{PM2018,HM2007,HM2011}. Though
the behavior of geodesics is much different with that of the Riemannian one \cite{An1977,Ka1973},
the Randers spheres with $K\equiv1$ can still be viewed as the
standard space forms in Finsler geometry.

There exist much more complicated Finsler metrics on spheres with $K\equiv1$. For example, R. Bryant
constructed non-Randers Finsler metrics on $S^n(1)$ with $K\equiv1$, such that their geodesics (as point sets) are great circles \cite{Br1996,Br1997,Br2002}. We will simply call them
{\it Bryant spheres} for simplicity.
A significant feature of Bryant spheres is that they are  projectively flat. All geodesics of a
Bryant sphere are closed. Applying some more discussion for its antipodal map (see \cite{Sh1996} or Section 2 for this notion), it is easy to see that all prime closed geodesics on a Bryant sphere have the same length $2\pi$.

From the view point of Lie theory, we see that
a Bryant sphere $(S^n(1),F)$
may admit some degree of isometric symmetry, i.e. the connected isometry group $G=I_0(S^n,F)$ may have a positive dimension. But
until now, no
homogeneous Bryant spheres have been found.

The first main theorem of this paper explains this phenomenon.

\begin{theorem}\label{main-thm}
Assume that $(S^n,F)$ is a homogeneous Finsler sphere with $n>1$ and $K\equiv 1$, and there exists a prime closed geodesic of length $2\pi$ (or equivalently, the order of its antipodal map is 2).
Then $F$ must be Riemannian.
\end{theorem}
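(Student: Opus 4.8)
The plan is to extract everything possible from the hypothesis before using homogeneity. Since $K\equiv 1$, Bonnet--Myers makes $S^{n}$ compact and the conjugate radius equals $\pi$, so the antipodal map $A$ (see Section~2) is defined; the hypothesis, in its equivalent form, means $A$ has order $2$, i.e. $A^{2}=\mathrm{id}\neq A$. Along any geodesic $\gamma$ one then has $A(\gamma(t))=\gamma(t+\pi)$, so every geodesic is a smoothly closed curve of length $2\pi$ and the geodesic flow is $2\pi$-periodic; moreover $K\equiv1$ forces the Jacobi equation along each closed geodesic to be $J''+J=0$ on the $g_{\dot\gamma}$-orthogonal complement of $\dot\gamma$, so the linearized Poincar\'e return maps are trivial. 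As $A$ is built from the geodesic structure alone it commutes with the full isometry group, hence with $G:=I_{0}(S^{n},F)$; as it carries geodesics to geodesics preserving the arclength parameter up to the shift $t\mapsto t+\pi$, it is affine, hence an isometry. Finally $\mathrm{Fix}(A)$ is $G$-invariant and proper, hence empty, so $A$ is a fixed-point-free isometric involution commuting with $G$.

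Next I would invoke the classification of compact connected transitive actions on spheres: such a $G$ acts linearly, so after a diffeomorphism $G\le SO(n+1)$, and both $F$ and the round metric $h$ become $G$-invariant, corresponding to an $\mathrm{Ad}(H)$-invariant Minkowski norm $F_{p}$, resp. inner product, on $T_{p}S^{n}$. If $H$ acts transitively on a sphere of $T_{p}S^{n}$ --- the cases $SO(n+1)/SO(n)$, $G_{2}/SU(3)$, $\mathrm{Spin}(7)/G_{2}$ --- then every invariant Minkowski norm is Euclidean, so $F$ is Riemannian and, having $K\equiv1$, is the standard metric. There remain the "Hopf-type" pairs $S^{2m-1}=SU(m)/SU(m-1)$ or $U(m)/U(m-1)$, $S^{4m-1}$ under $Sp(m)$, $Sp(m)Sp(1)$ or $Sp(m)U(1)$, and $S^{15}=\mathrm{Spin}(9)/\mathrm{Spin}(7)$; passing if necessary to a minimal transitive subgroup, $N_{G}(H)/H$ is then a circle, an $Sp(1)$, or a two-element group, whose right action on $S^{n}=G/H$ is free and is the corresponding Hopf action.

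Then I would identify $A$. Being $G$-equivariant, $A$ is right translation by some $n\in N_{G}(H)/H$; from $A^{2}=\mathrm{id}$ and freeness, $n^{2}=e$, and fixed-point-freeness excludes $n=e$, so $n$ is the unique non-trivial involution of the Hopf group and $A$ equals the round antipodal map $x\mapsto -x$. (That $n$ is as claimed, and not some other fibre element, I would confirm by running a homogeneous geodesic along the Hopf fibre through $p$: by the geodesic-orbit criterion a suitable one-parameter subgroup traces a closed $F$-geodesic there, which, being prime, has length $2\pi$; this forces its velocity to have the same $F_{p}$-length, and its half-turn point to be $-p$, as for $h$.) So $A_{F}=A_{h}=-\mathrm{id}$; hence every $F$-geodesic satisfies $\gamma(t+\pi)=-\gamma(t)$, and every Killing field of $F$ is odd, $X(-x)=-X(x)$, because $-\mathrm{id}$ commutes with $G$.

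The last step --- which I expect to be the genuine obstacle --- is to pass from "$F$ is $G$-invariant, $K\equiv1$, $A_{F}=-\mathrm{id}$" to "$F=h$". For Randers $F$ this is immediate: by Bao--Robles--Shen, $F$ comes from $h$ by navigation with a Killing field $V$, $\|V\|_{h}<1$, and one checks $A_{F}=A_{h}$ iff $\exp(\pi V)=\mathrm{id}$, which with $\|V\|_{h}<1$ forces $V=0$. In general the navigation description is unavailable, so I would try to show directly one of: (i) $F$ is a Killing-navigation of $h$, reducing to the Randers case; (ii) the $F$-geodesics, being $(-\mathrm{id})$-invariant closed curves with trivial return map and $K\equiv1$, are great circles, so $F$ is projectively flat, and a homogeneous projectively flat Finsler metric with $K\equiv1$ is, by Shen's description, governed by one odd function of one variable which the Hopf symmetry forces to be even, hence zero; or (iii) the triviality of the return map, the oddness of the Killing fields, and the Chern-connection structure equations along generic geodesics together force the Cartan torsion of $F_{p}$ to vanish at $p$, hence everywhere by homogeneity. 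In every route the point is that constant flag curvature $1$ is, for non-Riemannian Finsler metrics, far weaker than constant sectional curvature, so the needed rigidity must come from the $2\pi$-periodicity of the geodesic flow combined with homogeneity rather than from the curvature alone.
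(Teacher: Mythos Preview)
Your setup is sound through the identification $A_F=-\mathrm{id}$, which matches the paper's Lemma~\ref{lemma-2}. The genuine gap is exactly where you flag it: the passage from ``$G$-invariant, $K\equiv1$, $A_F=-\mathrm{id}$'' to ``$F=F_0$''. None of your three proposed routes closes it. Route~(i) presupposes that $F$ is a navigation deformation of the round metric, which is essentially the conclusion of Theorem~\ref{main-thm-2} and in the paper is \emph{derived from} Theorem~\ref{main-thm}, not conversely. Route~(ii) asserts that the $F$-geodesics are great circles; nothing in the hypotheses forces this (the $2\pi$-periodicity and trivial return map do not by themselves pin down the point-set image of a geodesic), and without it projective flatness is unavailable. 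Route~(iii) is a hope, not an argument.

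The missing idea is short and decisive: the \emph{round} metric $F_0$ is $G$-geodesic orbit for every transitive $G\subset SO(n+1)$ (Lemma~\ref{lemma-1}). Hence every great circle is an orbit $t\mapsto(\exp tX)\cdot x$ of a one-parameter subgroup of $G$, and since $G$ acts by $F$-isometries this curve has \emph{constant $F$-speed}. Comparing its $F$-length from $x$ to $-x$ with $d_F(x,-x)=\pi$ gives $F(x,v)\ge F_0(x,v)$ for every tangent vector (Lemma~\ref{lemma-3}). Now if $F\neq F_0$, pick $v$ with $F_0(x,v)<F(x,v)=1$; the $F$-unit geodesic from $x$ to $-x$ has $F$-length $\pi$ but $F_0$-length strictly less than $\pi$, contradicting $d_{F_0}(x,-x)=\pi$. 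You used a homogeneous geodesic only along the Hopf fibre to locate $A$; the paper's point is to use the g.o.\ property of $F_0$ in \emph{every} direction, turning the coincidence of antipodal maps into a global norm comparison. Once you see that, the remaining Hopf-type cases require no separate treatment and routes (i)--(iii) become unnecessary.
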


The proof of Theorem \ref{main-thm} is based on an observation for the geodesic orbit (g.o. in short) properties of the standard Riemannian metric $F_0$ on a unit sphere, with respect to different
transitive isometric group actions, and
a comparison between $F$ and $F_0$, concerning their antipodal maps $\psi$ and $\psi_0$
respectively, and the indicatrices they define in each tangent space.

Here are two immediate corollaries of Theorem \ref{main-thm}.
The notion of Bryant sphere requires the flag curvature $K\equiv1$ and the existence of prime closed geodesics of length $2\pi$, so we get
the first corollary, i.e.
\begin{corollary}\label{main-cor-1}
There does not exist any homogeneous Bryant sphere.
\end{corollary}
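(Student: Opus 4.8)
The proof is a one-line deduction from Theorem \ref{main-thm}, so the plan is simply to unwind the definitions. First I would recall what is built into the term \emph{Bryant sphere}: by the convention fixed in the Introduction it is a non-Randers Finsler metric on $S^n(1)$ with $n>1$, having constant flag curvature $K\equiv1$ and all of its geodesics (as point sets) being great circles. As already observed in the Introduction, the last property, together with the analysis of the antipodal map (see \cite{Sh1996} or Section 2), forces every prime closed geodesic on a Bryant sphere to have length exactly $2\pi$; equivalently, the order of its antipodal map is $2$.

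Now suppose, toward a contradiction, that some Bryant sphere $(S^n,F)$ is homogeneous. Then $(S^n,F)$ satisfies every hypothesis of Theorem \ref{main-thm}: it is a homogeneous Finsler sphere with $n>1$ and $K\equiv1$, and it admits a prime closed geodesic of length $2\pi$. Theorem \ref{main-thm} therefore yields that $F$ is Riemannian. But a Riemannian metric is in particular a Randers metric, with vanishing defining one-form, which contradicts the fact that a Bryant sphere is non-Randers by definition. Hence no homogeneous Bryant sphere exists.

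There is essentially no obstacle in this argument: it is purely a matter of verifying that Bryant spheres fall within the scope of Theorem \ref{main-thm}. The only point that deserves a moment's care is the claim that every prime closed geodesic of a Bryant sphere has length $2\pi$, but this is exactly the antipodal-map computation indicated in the Introduction (and recalled in Section 2), so nothing beyond Theorem \ref{main-thm} and the standing definitions is required.
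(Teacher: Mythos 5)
Your proposal is correct and matches the paper's own (very short) argument: the paper derives the corollary exactly by noting that a Bryant sphere has $K\equiv1$ and prime closed geodesics of length $2\pi$, so Theorem \ref{main-thm} forces it to be Riemannian, contradicting the non-Randers requirement in the definition. No gaps.
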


It is easy to see that on a reversible Finsler sphere $(S^n,F)$ with $n>1$ and $K\equiv1$, the order of its antipodal map is 2, so all geodesics are closed and all prime closed geodesics have the same length $2\pi$ (see Lemma \ref{lemma-0} in Section 2). So we have the second corollary
of Theorem \ref{main-thm}, i.e.

\begin{corollary}\label{main-cor-2}
Any reversible homogeneous Finsler sphere with $K\equiv1$
is Riemannian.
\end{corollary}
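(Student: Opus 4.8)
The plan is to obtain Corollary~\ref{main-cor-2} as a direct consequence of Theorem~\ref{main-thm}. The only extra input required is that a reversible Finsler sphere $(S^n,F)$ with $n>1$ and $K\equiv1$ automatically satisfies the hypothesis of Theorem~\ref{main-thm}, i.e. it possesses a prime closed geodesic of length $2\pi$, or equivalently its antipodal map $\psi$ has order $2$. This is exactly the content of Lemma~\ref{lemma-0}, so the first thing I would do is establish that lemma.

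For this, recall from \cite{Sh1996} the structure of geodesics when $K\equiv1$: for every $p\in S^n$ and every $v$ in the indicatrix $S_pS^n$, the unit-speed geodesic $\gamma_v$ reaches a point $\psi(p)$ at arclength $\pi$ that does not depend on $v$, and $\psi\colon S^n\to S^n$ is a diffeomorphism without fixed points, since a geodesic loop of length $\le\pi$ would contradict the absence of conjugate points before arclength $\pi$. Running $\gamma_v$ backward from $p$ for arclength $\pi$ likewise reaches a point; call this the backward antipodal map $B$. The two key observations are: (i) $B=\psi^{-1}$, because starting at $q=\psi(p)=\gamma_v(\pi)$ and retracing $\gamma_v$ for arclength $\pi$ returns to $p$; and (ii) when $F$ is reversible, retracing $\gamma_v$ backward from $p$ is the same as running the geodesic with initial velocity $-v\in S_pS^n$ forward, so $B(p)=\gamma_{-v}(\pi)=\psi(p)$, i.e. $B=\psi$. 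Hence $\psi=\psi^{-1}$, so $\psi^2=\mathrm{id}$, and since $\psi\ne\mathrm{id}$ its order is exactly $2$; consequently every geodesic closes up with prime period $2\pi$. Feeding this into Theorem~\ref{main-thm} yields that $F$ is Riemannian.

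The main point needing care is purely the bookkeeping around the antipodal map: that it is globally well-defined on $S^n$, that it is fixed-point free, and that "$\psi$ has order $2$" is genuinely equivalent to the existence of a prime closed geodesic of length $2\pi$ — all of which is subsumed by Lemma~\ref{lemma-0}. Beyond that the corollary is immediate, since all of the substantive work already lies inside Theorem~\ref{main-thm}.
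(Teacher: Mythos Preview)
Your proposal is correct and follows essentially the same route as the paper: show that reversibility forces the antipodal map to satisfy $\psi^2=\mathrm{id}$, then invoke Theorem~\ref{main-thm}. The only cosmetic difference is that the paper phrases the $\psi^2=\mathrm{id}$ step via the distance characterization of $\psi$ (reversibility gives $d_F(\psi(x),x)=d_F(x,\psi(x))=\pi$, and $\psi(\psi(x))$ is the unique point at $F$-distance $\pi$ from $\psi(x)$), whereas you obtain it by identifying the backward antipodal map with both $\psi^{-1}$ and $\psi$; these are equivalent formulations of the same observation.
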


Notice that in 2006, R. Bryant proved the following theorem \cite{Br2006}.
\begin{theorem}\label{Bryant theorem}
Any reversible Finsler 2-sphere with $K\equiv1$
is Riemannian.
\end{theorem}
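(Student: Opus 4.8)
The plan is to reduce the theorem to the vanishing of the Cartan torsion, then to exploit the very rigid structure that $K\equiv1$ together with reversibility forces on the unit tangent bundle $\Sigma=S(S^2)$.

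First I would recall two standard facts: a Finsler metric is Riemannian exactly when its Cartan torsion vanishes identically, and on a surface the Cartan torsion is carried by a single scalar function $I$ on $\Sigma$ (the main scalar), with $I\equiv0$ characterizing the Riemannian case. So the theorem is equivalent to the assertion $I\equiv0$. Next, using the Chern connection and the Berwald frame I would equip the $3$-manifold $\Sigma$ with its canonical coframing $(\omega^1,\omega^2,\omega^3)$, $\omega^1$ the Hilbert contact form, substitute $K\equiv1$ into the structure equations, and run the Bianchi identities. These relate $I$, the Landsberg scalar $J=I_{;1}$ (the derivative of $I$ along the geodesic spray), and $K$; for $K\equiv1$ they yield a second-order linear ODE for $I$ along each unit-speed geodesic $\gamma$, of the shape $i''+\mu\,i=0$, where $i(t):=I(\gamma(t),\dot\gamma(t))$ and $\mu>0$ is a universal constant. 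Hence $i$ is trigonometric of a fixed period along every geodesic, the pair $(I,J)$ is rotated rigidly by the geodesic flow, and $I^2+\mu^{-1}J^2$ is constant along geodesics.

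I would then bring in the global structure provided by reversibility. By Lemma~\ref{lemma-0} the antipodal map $\psi$ of a reversible Finsler $2$-sphere with $K\equiv1$ has order $2$, so every geodesic is closed of length $2\pi$; hence the geodesic flow is a free $2\pi$-periodic circle action on $\Sigma\cong\mathbb{RP}^3$, the space of oriented geodesics $\Lambda=\Sigma/S^1$ is a smooth $2$-sphere, and the flow-invariant function $I^2+\mu^{-1}J^2$ descends to $\Lambda$. Reversibility also makes the flip $\sigma(x,y)=(x,-y)$ a structure-preserving involution of $\Sigma$: since the fundamental tensor is even and the Cartan tensor is odd in $y$, the main scalar obeys $\sigma^*I=I$, and consequently $\sigma^*J=-J$. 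Finally $d\psi$ is the time-$\pi$ map of the geodesic flow, which preserves $\Sigma$; therefore $\psi$ is itself an $F$-isometry, $I$ is invariant under $d\psi$, and $i(t+\pi)=i(t)$ along every geodesic.

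The remaining step is a rigidity statement, which I expect to be the real obstacle: one must combine the ODE of the second step with all the global constraints just listed --- smoothness of $I$ across each indicatrix circle, compatibility of the two circle fibrations $S^2\leftarrow\Sigma\to\Lambda$, and the symmetries $\sigma$ and $\psi$ --- and conclude $I\equiv0$; equivalently, that the overdetermined system of partial differential equations governing reversible Finsler $2$-spheres with $K\equiv1$ has the round sphere as its only solution. Reversibility has to enter here essentially, for a Bryant sphere satisfies $K\equiv1$, has all prime closed geodesics of length $2\pi$ --- hence the same $2\pi$-periodic geodesic flow, the same $\Lambda\cong S^2$, and the relation $i(t+\pi)=i(t)$ --- and yet has $I\not\equiv0$; so everything available before reversibility is used cannot by itself force $F$ to be Riemannian. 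For this rigidity argument I would follow Bryant's analysis in \cite{Br2006}.
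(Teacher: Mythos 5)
Your proposal does not actually prove the statement: the entire content of Theorem \ref{Bryant theorem} is concentrated in the step you yourself call ``the remaining step,'' and there you write that you would follow Bryant's analysis in \cite{Br2006} --- but \cite{Br2006} is precisely the paper in which this theorem is established, so the appeal is circular rather than a proof. Everything you do set up (reduction to the vanishing of the main scalar $I$, the ODE $i''+\mu i=0$ along unit-speed geodesics coming from the Bianchi identities with $K\equiv1$, the $2\pi$-periodic geodesic flow on $\Sigma\cong\mathbb{RP}^3$, the descent of $I^2+\mu^{-1}J^2$ to the space of geodesics, and the parities of $I$ and $J$ under the flip $\sigma$) is reasonable scaffolding, and you correctly diagnose that it cannot suffice on its own, since Bryant's non-reversible examples satisfy all of it except the $\sigma$-symmetry; but you never show how that symmetry actually forces $I\equiv0$, and that is the whole difficulty. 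As it stands the proposal is a reduction plus a citation of the result being proved. (A minor further point: Lemma \ref{lemma-0} as stated assumes homogeneity; in your non-homogeneous setting you should obtain $\psi^2=\mathrm{id}$ directly from reversibility, via $d_F(x,\psi(x))=d_F(\psi(x),x)=\pi$ and the uniqueness of the point at distance $\pi$, and then use that $\psi$ is a Clifford--Wolf translation to close up the geodesics.)

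For comparison: the paper itself offers no proof of Theorem \ref{Bryant theorem}; it is quoted as Bryant's theorem from \cite{Br2006}. What the paper proves is only the homogeneous analog, Corollary \ref{main-cor-2}, and by an entirely different, elementary route: for a homogeneous Finsler sphere with $K\equiv1$ whose antipodal map has order two, Lemma \ref{lemma-2} identifies the antipodal map with $x\mapsto -x$, Lemma \ref{lemma-3} uses the geodesic orbit property of the round metric (Lemma \ref{lemma-1}) to show $F\geq F_0$ at every point and direction, and Theorem \ref{main-thm} concludes $F=F_0$ by a length comparison along a single geodesic. That argument uses homogeneity essentially (through Lemma \ref{lemma-1}) and cannot substitute for the structure-equation rigidity analysis that the general reversible two-dimensional case requires and that your proposal leaves open; conversely, your Cartan-torsion framework, if completed, would apply without any symmetry assumption, which is exactly why its missing final step is the hard part.
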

Later in 2009,
C. Kim and K. Min discussed the generalization
of Theorem \ref{Bryant theorem} to high dimensions.
Comparing their argument to that for Corollary \ref{main-cor-2},
we see that this problem is much simpler in the homogeneous context.

Using Theorem \ref{main-thm}, we can provide a more
self contained proof of the following theorem in \cite{Xu2018},
without using \cite{KM2009} (i.e. Theorem 6.2 in \cite{Xu2018}).

\begin{theorem}\label{main-thm-2}
Any geodesic orbit Finsler sphere $(S^n,F)$ with $K\equiv1$
is Randers.
\end{theorem}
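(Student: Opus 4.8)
The plan is to realize $F$ as the Zermelo navigation of the standard Riemannian sphere and then invoke the elementary fact that the navigation of any Riemannian metric by an admissible vector field is a Randers metric.

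Since every geodesic orbit space is homogeneous, $(S^n,F)$ is a homogeneous Finsler sphere with $n>1$ and $K\equiv1$; write $G=I_0(S^n,F)$, a compact connected Lie group acting transitively on $S^n$, and recall that on a geodesic orbit space every geodesic is the orbit $t\mapsto\exp(tX)\cdot p$ of a one-parameter subgroup $\exp(tX)\subset G$. The first step is to run the structural analysis of \cite{Xu2018}, retaining only those parts that do not appeal to \cite{KM2009}: this produces a $G$-invariant Killing vector field $W$ of $(S^n,F)$, admissible for Zermelo navigation, such that the metric $h$ obtained by navigating $(F,W)$ is again a homogeneous Finsler sphere with $n>1$ and $K\equiv1$, and moreover $h$ possesses a prime closed geodesic of length $2\pi$ (equivalently, the order of the antipodal map of $h$, in the sense of \cite{Sh1996}, equals $2$). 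Heuristically $W$ is the Killing field that encodes the \emph{extra rotation} by which the prime closed geodesics of $F$ fail to have length $2\pi$; that the navigated metric retains constant flag curvature $1$ is the standard invariance of $K$ under Killing navigation \cite{HM2007,HM2011,PM2018}.

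The point of the construction is that Theorem \ref{main-thm} now applies to $h$: being a homogeneous Finsler sphere with $n>1$, $K\equiv1$, and a prime closed geodesic of length $2\pi$, $h$ must be Riemannian, hence isometric to the standard metric on $S^n(1)$. (When $h$ can be arranged to be reversible one may instead quote Corollary \ref{main-cor-2}; but routing through Theorem \ref{main-thm} is cleaner, since only a single prime closed geodesic of length $2\pi$ is needed.) Since the navigation correspondence is invertible, $F$ is the navigation of the Riemannian metric $h$ by the $h$-Killing, admissible vector field $-W$; and the navigation of a Riemannian metric by an admissible ($h$-norm $<1$) vector field is always a Randers metric. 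Hence $F$ is Randers. This is the announced self-contained proof: Theorem \ref{main-thm} takes over the role played by \cite{KM2009} (Theorem 6.2 of \cite{Xu2018}).

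I expect the main obstacle to be the middle step, the construction of the \emph{unwinding} Killing field $W$. One has to show that the full discrepancy between the prime-closed-geodesic length of $F$ and $2\pi$ is accounted for by a genuine one-parameter group of $F$-isometries, that the associated Killing field is $G$-invariant and short enough to be admissible for navigation, and that the navigated metric $h$ indeed acquires a prime closed geodesic of length $2\pi$. Establishing this requires an interplay of the Lie theory of transitive compact group actions on spheres, the behaviour of homogeneous geodesics on $(S^n,F)$, and the focusing properties of geodesics on a Finsler sphere with $K\equiv1$ (Shen's antipodal map); it amounts to reorganizing the relevant portions of \cite{Xu2018} so that the concluding rigidity step invokes Theorem \ref{main-thm} rather than \cite{KM2009}.
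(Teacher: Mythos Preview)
Your strategy is exactly the paper's: navigate by a Killing field to force the antipodal map to satisfy $\psi^2=\mathrm{id}$, invoke Theorem~\ref{main-thm} to conclude the navigated metric is Riemannian, then invert the navigation to exhibit $F$ as Randers. What you leave implicit, and what the paper makes explicit, is that this cannot be done by a single uniform $W$: the paper first invokes Theorem~\ref{classification-g-o-spheres} to discard $Sp(k)/Sp(k-1)$ (the only non-g.o.\ entry in Table~\ref{table-1}, and the one case where no such $W$ is available and no reversibility argument rescues you), and then splits the remaining cases according to whether $\mathfrak{c}(\mathfrak{g})$ is nontrivial.

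For $G=U(k)$ or $Sp(k)U(1)$ (cases No.~2 and 4) the one-dimensional center supplies the constant-length Killing field, and Section~6 of \cite{Xu2018} shows how to scale it so that the navigated metric $\tilde F$ has $\psi^2=\mathrm{id}$; this is precisely your middle step, and it is already carried out there. For $G=Sp(k)Sp(1)$ or $Spin(9)$ (cases No.~5 and 6), however, $\mathfrak{g}$ is semisimple and $\mathfrak{c}(\mathfrak{g})=0$, so there is no nontrivial $G$-invariant Killing field to navigate by. The paper handles these cases differently: both coset spaces are weakly symmetric, hence $F$ is reversible, hence $\psi^2=\mathrm{id}$ already holds and Theorem~\ref{main-thm} applies with no navigation needed. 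Your sketch becomes a proof once you make this dichotomy explicit (equivalently, allow $W=0$ and supply the weakly-symmetric reversibility argument in cases No.~5 and 6); as written it suggests that \cite{Xu2018} hands you a nonzero $W$ in every case, which it does not.
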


By the classification of geodesic orbit Finsler spheres in \cite{Xu2018}, Theorem \ref{main-thm-2} can be equivalently stated as the following.

\begin{theorem}
Let $(S^n,F)$ be a homogeneous Finsler sphere such that it has constant flag curvature $K\equiv1$, and its connected isometry group $I_0(S^n,F)$ is not isomorphic to $Sp(k)$ if $n=4k-1$ for some positive integer $k$. Then
$F$ must be Randers.
\end{theorem}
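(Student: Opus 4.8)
The plan is to read this statement off from Theorem \ref{main-thm-2} together with the classification of geodesic orbit Finsler spheres carried out in \cite{Xu2018}; once those are in hand the argument is purely formal. First I would recall that for a homogeneous Finsler sphere $(S^n,F)$ the connected isometry group $I_0(S^n,F)$ acts transitively and effectively on $S^n$, hence (Montgomery--Samelson, Borel) is, up to covering, one of $SO(n+1)$, $U(m)$ or $SU(m)$ with $n=2m-1$, one of $Sp(k)$, $Sp(k)Sp(1)$, $Sp(k)U(1)$ with $n=4k-1$, or one of the exceptional cases $G_2$ ($n=6$), $Spin(7)$ ($n=7$), $Spin(9)$ ($n=15$). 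Thus the hypothesis in the statement merely deletes the single possibility $I_0(S^n,F)\cong Sp(k)$ with $n=4k-1$ from this finite list.

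Next I would invoke the classification of geodesic orbit Finsler spheres with $K\equiv1$ from \cite{Xu2018}: it shows in particular that, among the transitive actions above, the $Sp(k)$-action on $S^{4k-1}$ is the only one that can carry an invariant Finsler metric with $K\equiv1$ which fails to be geodesic orbit, whereas for every other transitive action an invariant Finsler metric with $K\equiv1$ is automatically geodesic orbit. Hence, for a homogeneous Finsler sphere with $K\equiv1$, the condition ``$I_0(S^n,F)\not\cong Sp(k)$ when $n=4k-1$'' is equivalent to ``$(S^n,F)$ is geodesic orbit.'' Granting this dictionary, the class of spaces appearing in the present statement coincides with the class of spaces in Theorem \ref{main-thm-2}; since Theorem \ref{main-thm-2} says every member of that class is Randers, the conclusion follows. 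For context one may also note that the Bao--Robles--Shen Randers spheres with $K\equiv1$ of \cite{BRS} are themselves all geodesic orbit, so no Randers metric is lost in the passage between the two formulations, but this is not needed for the equivalence.

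The one point carrying genuine mathematical content here is the classification input from \cite{Xu2018} --- namely the case-by-case verification along the finite list of transitive sphere actions that the $Sp(k)$-action on $S^{4k-1}$ is the unique place where invariance together with $K\equiv1$ does not force the geodesic orbit property --- and this is precisely what one is allowed to cite. Once Theorem \ref{main-thm-2} and that classification are available, the present statement is just Theorem \ref{main-thm-2} rephrased in Lie-theoretic language, so I expect no further obstacle.
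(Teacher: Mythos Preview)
Your proposal is correct and matches the paper's own treatment: the paper does not give a separate proof of this statement but simply remarks that, by the classification of geodesic orbit Finsler spheres in \cite{Xu2018} (recorded here as Theorem~\ref{classification-g-o-spheres}), it is an equivalent restatement of Theorem~\ref{main-thm-2}. The only minor inaccuracy is that the classification result from \cite{Xu2018} does not need the hypothesis $K\equiv1$ --- it asserts that any homogeneous Finsler sphere is g.o.\ unless $I_0(S^n,F)=Sp(k)$ --- so you may drop that extra qualifier when invoking it.
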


Then we discuss the behavior of geodesics on a homogeneous Finsler sphere with $K\equiv1$. The Riemannian case and the (non-Riemannian) Randers case
are well understood. They provide models and motivations for our
previous works estimating the number of orbits of prime closed geodesics on Finsler spheres with $K\equiv1$ \cite{Xu2018-1,Xu2018-3} and homogeneous Finsler spaces \cite{Xu2018-2}. We show that many properties of the geodesics
on a homogeneous Randers sphere with $K\equiv1$ can be generalized to the non-Randers case. For the precise statement, see Theorem \ref{main-thm-3} in Section 4, which is a homogeneous analog of Theorem 2 in \cite{BFIMZ}.

Finally, we remark that, the existence of non-Randers homogeneous
Finsler spheres with $K\equiv1$ (which must be of type $Sp(k)/Sp(k-1)$ according to the classification of homogeneous spheres in \cite{MS1943}) is still an open problem. Recently, L. Huang and X. Mo constructed new examples of invariant Einstein Finsler metrics on the homogeneous sphere $Sp(k)/Sp(k-1)$ \cite{HM2018}.
Their method also sheds light on solving this open problem.

This paper is organized as following. In Section 2, we summarize some necessary knowledge on Finsler geometry and homogeneous geometry. In Section 3, we prove Theorem \ref{main-thm}, and sketch an alternative approach proving Theorem \ref{main-thm-2}
(Theorem 6.2 in \cite{Xu2018}). In Section 3, we discuss the behavior of geodesics on a homogeneous Finsler sphere with $K\equiv1$. In particular, we propose Theorem \ref{main-thm-3},
concerning the non-Randers homogeneous Finsler spheres with $K\equiv1$. In Section 4, we prove Theorem \ref{main-thm-3}.

\section{Preliminaries}

Firstly, we summarize some fundamental knowledge on Finsler geometry. See \cite{BCS2000,CS2005,Sh2001} for more details.

The {\it Finsler metric} on a connected smooth manifold $M^n$ is a continuous
function $F:TM\rightarrow [0,+\infty)$ satisfying the following
conditions:
\begin{description}
\item{\rm (1)} $F$ is positive and smooth when restricted to
the slit tangent bundle $TM\backslash 0$.
\item{\rm (2)} $F$ is positively homogeneous of degree one, i.e.
for any $x\in M$ and $y\in T_xM$, $F(x,\lambda y)=\lambda F(y)$ when $\lambda\geq 0$.
\item{\rm (3)} $F$ is strongly convex, i.e. for any standard local coordinates $x=(x_i)\in M$ and $y=y^j\partial_{x^j}\in T_xM$
    on $TM$, the Hessian $(g_{ij}(x,y))=(\frac12[F^2]_{y^iy^j})$
    is positive definite when $y\neq0$.
\end{description}
We will also call $(M,F)$ a {\it Finsler manifold} or a {\it Finsler space}. We say $F$ is {\it reversible} if $F(x,y)=F(x,-y)$ for any $x\in M$
and any $y\in T_xM$.

On one hand, the Hessian matrices define an inner product
$$\langle u,v\rangle_y=\frac12\frac{\partial^2}{\partial s\partial t}|_{s=t=0}F^2(y+su+tv)=g_{ij}(x,y)u^i v^j,\quad\forall u=u^i\partial_{x^i}, v=v^j\partial_{x^j}\in T_xM$$
which depends on the nonzero vector $y\in T_xM$. Sometimes, we denote this inner product as $g_y^F$ and call it the {\it fundamental tensor}.

On the other hand, the Finsler metric $F$ defines the arc length of a curve and the distance function $d_F(\cdot,\cdot)$ on $M$.
By the local minimizing principle, geodesics can be similarly defined as in Riemannian geometry.
In this paper, we will only consider geodesics $c(t)$ with positive constant speeds, i.e. $F(\dot{c}(t))\equiv\mathrm{const}>0$.

A Finsler metric is Riemannian iff the fundamental tensor $g_y^F$
is independent of the nonzero vector $y$.
The most important and simplest non-Riemannian metric is Randers metric, which is of the form $F=\alpha+\beta$, in which
$\alpha$ is a Riemannian metric and $\beta$ is a one-form. A Randers metric can be also determined by the navigation process
from the datum $({F}',V)$, in which ${F}'$ is a Riemannian metric, and $V$ is a vector field satisfying ${F}'(V)<1$ everywhere, such that $F(x, y+{F}'(y)V(x))={F}'(x,y)$ for any $x\in M$ and $y\in T_xM$. A geometrical description for the navigation process is the following. At each point $x\in M$, the indicatrix $S_x^FM=\{y\in T_xM\mbox{ with }F(x,y)=1\}\subset T_xM$ is the parallel shifting of the indicatrix $S_x^{{F}'}M$ by the vector $V(x)$.

The flag curvature $K(x,y,\mathbf{P})$ (or simply $K$ sometimes), where $\mathbf{P}=\mathrm{span}\{y,u\}$ is a tangent plane in $T_xM$, is defined as
$$K(x,y,\mathbf{P})=\frac{\langle R_yu,u\rangle_y}{
\langle u,u\rangle_y\langle y,y\rangle_y-[\langle u,y\rangle_y]^2},$$
in which $R_y:T_xM\rightarrow T_xM$ is the Riemann curvature.

The explicit presentations of geodesics and curvatures using local coordinates can be found in the references previously given.

Secondly, we introduce the antipodal map for a Finsler sphere with constant flag curvature $K\equiv1$.

Assume that $(S^n,F)$ is a Finsler sphere with $n>1$ and $K\equiv1$. Then all geodesic rays starting at $x\in M$ will meet again, after the same arc length $\pi$, at
another point $x^*\neq x$ \cite{Sh1996}. For each $x$, $x^*$ is the unique point satisfying $d_F(x,x^*)=\mathrm{diag}(S^n,F)=\pi$.

The map $\psi$ from $x$ to $x^*$ is an isometry of $(S^n,F)$ \cite{BFIMZ}. Further more, $\psi$ is a Clifford--Wolf translation which belongs to the center
of $I(S^n,F)$ \cite{Xu2018-3}. By the previous observation,
the $\psi$-orbit of $x$, i.e. $\psi^i(x)$ for all $i\in\mathbb{Z}$, is contained in any
geodesic passing $x$.

We simply call $\psi$ the {\it antipodal map} \cite{Xu2018-3}. For the standard Riemannian metric $F_0$ on the unit sphere $S^n(1)\subset\mathbb{R}^{n+1}$, its antipodal map is the classical one, i.e. $\psi_0(x)=-x$.

The order of $\psi$ in
$I(S^n,F)$ is the minimal positive integer $m$ with $\psi^m=\mathrm{id}$, or $\infty$ if such an integer does not exist.
The antipodal map for a Bryant sphere \cite{Br1996,Br1997,Br2002} has the order $m=2$.
In Finsler geometry, we will usually meet the situation that $m>2$
or $m=\infty$ \cite{BFIMZ}. The order of the antipodal map
 is a crucial index determining the behavior of geodesics on a Finsler sphere with $K\equiv1$. For example, we
have the following easy lemma.

\begin{lemma}\label{lemma-0}
Assume $(S^n,F)$ is a homogeneous Finsler sphere with $n>1$ and $K\equiv1$. Then the following statements are equivalent:
\begin{description}
\item{\rm (1)} The antipodal map $\psi$ satisfies $\psi^2=\mathrm{Id}$;
\item{\rm (2)} Each geodesic is closed and the length of each prime closed geodesic is $2\pi$;
\item{\rm (3)} Any prime closed geodesic has the same length
$2\pi$;
\item{\rm (4)} There exists a prime closed geodesic of length $2\pi$.
\end{description}
\end{lemma}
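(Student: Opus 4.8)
The plan is to prove the cycle of implications $(1)\Rightarrow(2)\Rightarrow(3)\Rightarrow(4)\Rightarrow(1)$. The single tool behind almost everything is the refocusing property of a Finsler sphere with $K\equiv1$: every unit-speed geodesic ray issuing from a point $p$ reaches $p^{*}=\psi(p)$ at arc length exactly $\pi$. Applying this to the ray $s\mapsto c(t+s)$ emanating from $c(t)$ in the direction $\dot c(t)$, where $c$ is any unit-speed geodesic, gives the identity $\psi(c(t))=c(t+\pi)$ for all $t$; differentiating in $t$ gives $d\psi_{c(t)}(\dot c(t))=\dot c(t+\pi)$. Thus $\psi$ acts on every geodesic as the time-$\pi$ shift of its parameter.

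For $(1)\Rightarrow(2)$: assuming $\psi^{2}=\mathrm{Id}$, the time-$\pi$ shift description gives $c(t+2\pi)=\psi^{2}(c(t))=c(t)$ and likewise $\dot c(t+2\pi)=\dot c(t)$ for every geodesic $c$, so every geodesic is closed with period dividing $2\pi$. To pin the prime period to $2\pi$, suppose it equals $2\pi/m$ with $m\geq2$. If $m$ is even, then $\pi$ is a period, forcing $c(\pi)=c(0)$, which contradicts $c(\pi)=\psi(c(0))=c(0)^{*}\neq c(0)$. If $m$ is odd (so $m\geq3$), then $\pi=\tfrac{m-1}{2}\cdot\tfrac{2\pi}{m}+\tfrac{\pi}{m}$, hence $c(0)^{*}=c(\pi)=c(\pi/m)$ and therefore $d_{F}(c(0),c(0)^{*})\leq\pi/m<\pi$, contradicting $d_{F}(c(0),c(0)^{*})=\mathrm{diam}(S^{n},F)=\pi$. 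So the prime period is $2\pi$, which is $(2)$. The implication $(2)\Rightarrow(3)$ is immediate. For $(3)\Rightarrow(4)$, recall that the compact manifold $S^{n}$ always admits a closed geodesic (Lyusternik--Fet), hence a prime closed geodesic, whose length is $2\pi$ by $(3)$.

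For $(4)\Rightarrow(1)$: let $c$ be a unit-speed prime closed geodesic of length $2\pi$ and set $x=c(0)$. Then $\psi(x)=c(\pi)$ and $\psi^{2}(x)=\psi(c(\pi))=c(2\pi)=c(0)=x$, so $\psi^{2}$ fixes $x$. Since $\psi$ lies in the center of $I(S^{n},F)$, so does $\psi^{2}$, and in particular $\psi^{2}$ commutes with the transitively acting group $I_{0}(S^{n},F)$; writing any $q\in S^{n}$ as $q=g(x)$ with $g\in I_{0}(S^{n},F)$ we get $\psi^{2}(q)=g(\psi^{2}(x))=g(x)=q$. Hence $\psi^{2}=\mathrm{Id}$, which is $(1)$. (Since $x^{*}\neq x$ we have $\psi\neq\mathrm{Id}$, so the order of $\psi$ is exactly $2$, in agreement with the parenthetical remark of Theorem~\ref{main-thm}.)

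The only step needing real care is $(1)\Rightarrow(2)$: the time-$\pi$ shift only tells us that $2\pi$ is \emph{some} period of each geodesic, and excluding the proper divisors $2\pi/m$ genuinely uses that $\mathrm{diam}(S^{n},F)=\pi$ together with the uniqueness of the point at distance $\pi$. Everything else is soft once the refocusing property and the centrality of $\psi$ are available.
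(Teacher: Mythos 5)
Your proposal is correct and follows essentially the same route as the paper: the same cycle $(1)\Rightarrow(2)\Rightarrow(3)\Rightarrow(4)\Rightarrow(1)$, driven by the fact that the antipodal map $\psi$ acts as the time-$\pi$ shift along every unit-speed geodesic, Fet's existence theorem for $(3)\Rightarrow(4)$, and the centrality of $\psi$ together with transitivity of the isometry group for $(4)\Rightarrow(1)$. If anything you are more explicit than the paper at $(1)\Rightarrow(2)$, where your case analysis ruling out prime periods $2\pi/m$ with $m\geq 2$ (using $\psi(x)\neq x$ and $d_F(x,\psi(x))=\pi$) fills in a detail the paper's gluing argument leaves implicit, and at $(4)\Rightarrow(1)$, which the paper dismisses as immediate.
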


\begin{proof} We first prove the statement from (1) to (2).
Assume $\psi^2=\mathrm{id}$. Let $\gamma$ be any $F$-unit geodesic $c(t)$ with $t\in[0,\pi]$ and $c(0)=x$.
Then $\gamma$ is a shortest geodesic from $x$ to $\psi(x)=c(\pi)$. Because $\psi$ is a Clifford--Wolf translation on $(S^n,F)$, and $\psi^2=\mathrm{id}$, the closed curve $\gamma\cup \psi(\gamma)$ is smooth at $x$ and $\psi(x)$, i.e. it is
a prime closed geodesic with the length $2\pi$. This proves the statement from (1) to (2).

The statements from (2) to (3) is obvious.

The statement from (3) to (4) follows immediately the existence of closed geodesics on any closed Finsler manifold \cite{Fe1965}.
In particular, when the isometry group has a positive dimension, we can apply Lemma 3.1 in \cite{Xu2018-1} to find
two distinct prime closed geodesics.

The statement from (4) to (1) follows immediately the definition of the antipodal map and the homogeneity of $F$.

This ends the proof of this lemma.
\end{proof}

Lastly, we recall the definition of homogeneous geodesics and geodesic orbit property in Finsler geometry \cite{YD2014}.

Assume the connected Finsler manifold $(M,F)$ admits the non-trivial
isometric action of a connected Lie group $G$. We call a geodesic $c(t)$ {\it$G$-homogeneous}, if $c(t)=\exp tX\cdot x$ for some $X\in\mathfrak{g}=\mathrm{Lie}(G)$ and $x\in M$, i.e. this geodesic is the orbit of some one-parameter
subgroup of $G$. We call $(M,F)$ a {\it $G$-geodesic orbit} (or {\it g.o.} in short) Finsler space,
if all geodesics on $(M,F)$ are $G$-homogeneous. If $G$ is not specified, the assumption $G=I_0(M,F)$ is automatically taken. Obviously, any connected
$G$-g.o. Finsler space
is $G$-homogeneous.

In \cite{Xu2018}, we have classified the
geodesic orbit Finsler spheres
by the following theorem, which generalizes a theorem of Yu.G. Nikonorov in the Riemannian context \cite{Ni2013}.

\begin{theorem} \label{classification-g-o-spheres}
A homogeneous Finsler sphere $(S^n,F)$ is g.o. unless $S^n=Sp(k)/Sp(k-1)$ with $I_0(S^n,F)=Sp(k)$ for some positive integer $k$.
\end{theorem}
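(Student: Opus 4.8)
The plan is to invoke the classification of homogeneous spheres \cite{MS1943}, applied to $G=I_0(S^n,F)$ (which is compact and connected, the isometry group of a compact Finsler space being compact), thereby reducing to a short list of pairs $(G,H)$: namely $(SO(n+1),SO(n))$, $(SU(m),SU(m-1))$, $(U(m),U(m-1))$, $(Sp(k),Sp(k-1))$, $(Sp(k)U(1),Sp(k-1)U(1))$, $(Sp(k)Sp(1),Sp(k-1)Sp(1))$, $(G_2,SU(3))$, $(Spin(7),G_2)$ and $(Spin(9),Spin(7))$. For each except $(Sp(k),Sp(k-1))$ one shows that an arbitrary $G$-invariant Finsler metric is $G$-g.o. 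The only analytic tool is the Finsler geodesic lemma \cite{YD2014}: fixing a reductive decomposition $\mathfrak{g}=\mathfrak{h}\oplus\mathfrak{m}$ and the base point $o=eH$, a nonzero $X\in\mathfrak{g}$ generates a homogeneous geodesic iff $g^F_{X_\mathfrak{m}}(X_\mathfrak{m},[X,Y]_\mathfrak{m})=0$ for all $Y\in\mathfrak{m}$. Hence $(S^n,F)$ is $G$-g.o. precisely when, for every nonzero $v\in\mathfrak{m}$ --- and by $\mathrm{Ad}(H)$-equivariance it suffices to take one $v$ in each $\mathrm{Ad}(H)$-orbit --- there is some $a\in\mathfrak{h}$ making $v+a$ a geodesic vector, whose associated homogeneous geodesic then has initial velocity $v$.

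First I would dispose of $(SO(n+1),SO(n))$, $(G_2,SU(3))$ and $(Spin(7),G_2)$, where $\mathrm{Ad}(H)$ is transitive on the unit sphere of $\mathfrak{m}$: then every $\mathrm{Ad}(H)$-invariant Minkowski norm on $\mathfrak{m}$ is a multiple of a Euclidean one, so $F$ is a rescaling of the Killing-form metric, which is normal homogeneous, in particular naturally reductive, in particular $G$-g.o. For the remaining pairs I would use the isotropy splitting $\mathfrak{m}=\mathfrak{m}_0\oplus\mathfrak{m}_1$ in which $\mathrm{Ad}(H)$ is transitive on the unit sphere of the ``base'' summand $\mathfrak{m}_1$ --- equal to $\mathbb{C}^{m-1}$, $\mathbb{H}^{k-1}$, or the $Spin(7)$-spinor module $\mathbb{R}^8$ --- and $\mathfrak{m}_0$ is the complementary ``fiber'' summand: $i\mathbb{R}$, a copy of $\mathrm{Im}\,\mathbb{H}$, or $\mathbb{R}^7$. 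Because $F_o$ is constant on the $\mathrm{Ad}(H)$-orbits in $\mathfrak{m}$, its indicatrix is a ``rotation body'' over $\mathfrak{m}_1$; after an $\mathrm{Ad}(H)$-move $v$ may be assumed to lie in the span $\mathfrak{m}_0\oplus\mathbb{R}v_1$ of a fixed $v_1\in\mathfrak{m}_1$, and this structure, together with Schur's lemma, forces $g^F_v$ to be block-diagonal for the splitting and to be quite symmetric on $\mathfrak{m}_0\oplus(\text{the }\mathrm{Ad}(H_{v_1})\text{-span of }v_1)$.

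For $v$ purely in $\mathfrak{m}_0$, one checks directly that $v$ is a geodesic vector (with $a$ taken, if necessary, from the extra $\mathfrak{u}(1)\subset\mathfrak{h}$ acting nontrivially on $\mathfrak{m}_0$): the bracket $[v,\mathfrak{m}_1]\subseteq\mathfrak{m}_1$ is annihilated by $g^F_v(v,\cdot)$, while the obstruction $[\mathfrak{m}_0,\mathfrak{m}_0]_\mathfrak{m}$ either vanishes ($\mathfrak{m}_0$ one-dimensional, or the $Sp(k)Sp(1)$ and $Spin(9)$ cases, in which moreover $\mathrm{Ad}(H)$ is transitive on the sphere of $\mathfrak{m}_0$ so $g^F_v|_{\mathfrak{m}_0}$ is Euclidean) or is absorbed by the $\mathfrak{u}(1)$-part of $a$ (the $Sp(k)U(1)$ case). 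This is exactly where $(Sp(k),Sp(k-1))$ parts company: there $\mathfrak{m}_0=\mathfrak{sp}(1)$ is a nonabelian subalgebra of $\mathfrak{m}$ on which $\mathrm{Ad}(H)=\mathrm{Ad}(Sp(k-1))$ acts trivially, so $g^F_v|_{\mathfrak{m}_0}$ can be an arbitrary inner product and no $a\in\mathfrak{h}$ repairs the obstruction $[\mathfrak{m}_0,\mathfrak{m}_0]_\mathfrak{m}=\mathfrak{m}_0$ --- the only geodesic vectors inside $\mathfrak{m}_0$ are then its eigendirections, and a generic invariant (indeed Riemannian) metric already fails to be g.o.

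The hardest part, which I expect to be the main technical obstacle, is the mixed directions $v=v_0+v_1$ with $v_0\in\mathfrak{m}_0$ and $v_1\neq0$: here $a$ must be chosen nontrivially, in the ``rotational'' part of $\mathfrak{h}$ acting on both $\mathfrak{m}_0$ and the line of $v_1$, and one has to solve the linear system $g^F_v(v,[v_0+v_1+a,Y]_\mathfrak{m})=0$ for $Y$ ranging over $\mathfrak{m}$. Using the rotation-body symmetry of $F_o$ and the bracket relations $[\mathfrak{m}_0,\mathfrak{m}_1]\subseteq\mathfrak{m}_1$ and $[\mathfrak{m}_1,\mathfrak{m}_1]_{\mathfrak{m}_0}$, this reduces to an identity inside $\mathfrak{m}_0\oplus(\text{span of }v_1)$ that is solvable precisely because the extra torus or $\mathfrak{sp}(1)$ acts nontrivially on $\mathfrak{m}_0$; the quaternionic bracket bookkeeping and the octonionic/triality computation for $(Spin(9),Spin(7))$ are the laborious steps. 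Once these are in place the geodesic lemma closes every non-excluded case; finally, a $Sp(k)$-invariant Riemannian metric on $S^{4k-1}=Sp(k)/Sp(k-1)$ with isometry group exactly $Sp(k)$ that fails to be g.o. (cf. \cite{Ni2013}) shows the exclusion is genuine, so the statement is sharp.
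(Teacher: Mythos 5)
First, note that this paper does not prove Theorem \ref{classification-g-o-spheres} at all: it is imported verbatim from \cite{Xu2018}, so the comparison has to be made with the proof there (whose method the present paper echoes in Section 3). Your overall strategy is the same machinery that proof uses where it computes directly: the Montgomery--Samelson list \cite{MS1943}, the Yan--Deng geodesic-vector criterion $g^F_{X_{\mathfrak m}}(X_{\mathfrak m},[X,Y]_{\mathfrak m})=0$ from \cite{YD2014}, the splitting $\mathfrak m=\mathfrak m_0\oplus\mathfrak m_1$, and the ``rotation body'' structure of the invariant Minkowski norm. Two small points of bookkeeping: ``g.o.'' in the statement means $I_0(S^n,F)$-g.o., so the case analysis should run over the possible full connected isometry groups (Table \ref{table-1}), and $SU(m)$ need not appear in your list --- for $m\geq 3$ the groups $\mathrm{Ad}(SU(m-1))$ and $\mathrm{Ad}(U(m-1))$ have the same orbits on $\mathfrak m$, so an $SU(m)$-invariant metric is automatically $U(m)$-invariant and $I_0$ is never $SU(m)$.

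The genuine gap is exactly the step you flag as the main obstacle and then leave as a plan: the mixed directions $v=v_0+v_1$. Your stated mechanism for solvability --- ``the extra torus or $\mathfrak{sp}(1)$ acts nontrivially on $\mathfrak m_0$'' --- does not exist in the case $(Spin(9),Spin(7))$: there is no extra factor, $\mathfrak h=\mathfrak{spin}(7)$ itself must supply the compensating element $a$, and this is precisely the octonionic/triality computation you defer; the $Sp(k)Sp(1)$ case has the same character. As written, the hardest cases are therefore not proved. The efficient way to close them, and the route of \cite{Xu2018} (recalled in Section 3 of this paper), is to observe that $Sp(k)Sp(1)/Sp(k-1)Sp(1)$ and $Spin(9)/Spin(7)$ (like $SO(n+1)/SO(n)$) are weakly symmetric cosets --- for every $X\in\mathfrak m$ some $h\in H$ gives $\mathrm{Ad}(h)X=-X$ --- and every invariant Finsler metric on a weakly symmetric homogeneous space is reversible and g.o. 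This removes the triality computation entirely and leaves only the two ``centered'' cases $U(k)/U(k-1)$ and $Sp(k)U(1)/Sp(k-1)U(1)$ for a hands-on geodesic-vector argument, where your choice of $a$ in the $\mathfrak u(1)$/stabilizer part does work (and where the rotation-body symmetry indeed yields the $a$-independent identities you implicitly need, such as $g^F_v(v,[v,Z]_{\mathfrak m})=0$ for the central direction $Z\in\mathfrak m_0$). With the weak-symmetry shortcut substituted for your deferred computations, your outline matches the known proof; without it, the $Spin(9)$ and $Sp(k)Sp(1)$ cases remain open in your write-up, as does the degenerate situation in your linear system when the radial coefficient of $g^F_v(v,\cdot)$ on $\mathfrak m_1$ vanishes.
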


We will also need the following result in \cite{BN2014} or \cite{Ni2013}
 for the g.o. properties of the standard Riemannian metric on a unit sphere, with respect to different transitive isometric group actions.

\begin{lemma}\label{lemma-1}
For any closed connected subgroup $G\subset SO(n+1)$ acting transitively on the unit sphere $S^n(1)\subset\mathbb{R}^{n+1}$ with $n>1$, the standard Riemannian metric $F_0$ on $S^n(1)$ is $G$-g.o..
\end{lemma}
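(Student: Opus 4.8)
The statement to prove is Lemma~\ref{lemma-1}: for any closed connected $G\subset SO(n+1)$ acting transitively on $S^n(1)$, the standard Riemannian metric $F_0$ is $G$-g.o. I would approach this via the standard algebraic criterion for homogeneous geodesics: fixing a base point $x\in S^n(1)$ with isotropy subgroup $H=G_x$, and writing a reductive decomposition $\mathfrak{g}=\mathfrak{h}\oplus\mathfrak{m}$ (with $\mathfrak{m}$ identified with $T_xS^n(1)$), it suffices to show that for every $v\in\mathfrak{m}$ there exists $X\in\mathfrak{g}$ with $X\cdot x = v$ and such that the curve $t\mapsto \exp(tX)\cdot x$ is a geodesic — equivalently, $\langle [X,Y]_{\mathfrak{m}}, v\rangle = 0$ for all $Y\in\mathfrak{m}$, where $\langle\cdot,\cdot\rangle$ is the $\mathrm{Ad}(H)$-invariant inner product on $\mathfrak{m}$ induced by $F_0$.

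**Key steps.** First I would recall that $S^n(1)$ is a symmetric space $SO(n+1)/SO(n)$, so for the \emph{full} isometry action the g.o. property is immediate (every geodesic is an orbit of a one-parameter subgroup of transvections); the content is to handle proper transitive subgroups $G$. Second, I would invoke the classification of connected Lie groups acting transitively and effectively on spheres (Montgomery--Samelson--Borel): the possibilities are $SO(n+1)$, $U(m)$ and $SU(m)$ for $n=2m-1$, $Sp(m)$, $Sp(m)Sp(1)$ and $Sp(m)U(1)$ for $n=4m-1$, $G_2$ for $n=6$, $Spin(7)$ for $n=7$, and $Spin(9)$ for $n=15$. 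Third, for each such $G$ I would exhibit, for a given $v\in T_xS^n(1)$, an explicit $X\in\mathfrak{g}$ whose orbit through $x$ is the great circle in the direction $v$ traversed at unit speed. The cleanest uniform way: since the geodesics of $F_0$ through $x$ are the great circles, it is enough to show that every great circle through $x$ is an orbit of a one-parameter subgroup of $G$; writing a great circle as the intersection of $S^n(1)$ with a $2$-plane $\Pi\subset\mathbb{R}^{n+1}$ through the origin, I would produce an element $X$ in $\mathfrak{g}\subset\mathfrak{so}(n+1)$ acting as a rotation in $\Pi$ and trivially on $\Pi^\perp$. For the classical and exceptional cases one checks that such a ``rotation in a $2$-plane spanned by $x$ and a tangent vector'' does lie in the relevant subalgebra — e.g. for $U(m)$ acting on $S^{2m-1}$ one uses that the tangent space splits into the Hopf direction (handled by the $U(1)$ center acting as the Reeb flow) and a complex hyperplane, and in both cases the corresponding infinitesimal rotation is skew-Hermitian; for $Sp(m)$, $G_2$, $Spin(7)$, $Spin(9)$ one argues similarly using the relevant algebraic structure. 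Alternatively, and more in the spirit of \cite{BN2014,Ni2013}, I would cite their result directly, since the lemma is stated as a known fact; but a self-contained argument reduces to the great-circle observation plus the finite case check.

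**Main obstacle.** The conceptual point is trivial once one knows that for $F_0$ the geodesics are great circles and that ``being g.o.'' only asks for \emph{one} one-parameter subgroup per direction; the genuine work is the case-by-case verification that each transitive $G$ on the MSB list contains, for every tangent direction $v$ at $x$, an infinitesimal rotation of the $2$-plane $\mathrm{span}\{x,v\}$. The slightly delicate cases are the exceptional ones ($G_2\subset SO(7)$, $Spin(7)\subset SO(8)$, $Spin(9)\subset SO(16)$), where one must use that these groups act transitively on the unit tangent sphere at $x$ (isotropy-irreducibility), so that producing a single suitable $X$ for one direction $v_0$ and then transporting by the isotropy group yields all directions; verifying that the orbit of $\exp(tX)$ is exactly the great circle — rather than some other curve — is where one must be careful, but it follows because $X$ kills $\mathrm{span}\{x,v_0\}^\perp$ and hence $\exp(tX)\cdot x$ stays in the $2$-plane. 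Since the lemma appears in the cited literature, I would in the paper simply reference \cite{BN2014} (or \cite{Ni2013}), and the above is the argument one would reconstruct if a proof were demanded.
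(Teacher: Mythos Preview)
The paper gives no proof of Lemma~\ref{lemma-1}; it simply records the statement as a known result and cites \cite{BN2014,Ni2013}. Your final recommendation---to reference those sources---is therefore exactly what the paper does.

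Your attempted self-contained argument, however, contains a genuine error. You claim that for each tangent direction $v$ at $x$ one can find $X\in\mathfrak{g}\subset\mathfrak{so}(n+1)$ that rotates the $2$-plane $\Pi=\mathrm{span}\{x,v\}$ and acts \emph{trivially} on $\Pi^{\perp}$; you repeat this in the last paragraph (``it follows because $X$ kills $\mathrm{span}\{x,v_0\}^{\perp}$''). For most proper transitive subgroups this is false. Take $G=U(m)$ on $S^{2m-1}\subset\mathbb{C}^{m}$, $x=e_1$, and $v=e_2$ (the real unit vector in the second complex coordinate). A real-linear map rotating $\mathrm{span}_{\mathbb{R}}\{e_1,e_2\}$ and fixing its orthogonal complement sends $e_1\mapsto e_2$ but $ie_1\mapsto ie_1$, so it is not $\mathbb{C}$-linear and hence not in $\mathfrak{u}(m)$. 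The same obstruction arises for $SU(m)$, $Sp(m)$, $Sp(m)U(1)$, and the exceptional cases: the rank-two skew-symmetric matrix supported on a generic real $2$-plane simply does not lie in the smaller subalgebra.

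What actually makes the g.o.\ property hold is weaker: one only needs $x$ to lie in a single $X$-invariant $2$-plane, while $X$ may rotate other orthogonal $2$-planes at will (since $x$ has no component there, the orbit $\exp(tX)\cdot x$ still traces the great circle). In the $U(m)$ example above, the correct choice is the skew-Hermitian block $X=\left(\begin{smallmatrix}0&-1\\1&0\end{smallmatrix}\right)\in\mathfrak{u}(2)\subset\mathfrak{u}(m)$, which rotates both $\mathrm{span}_{\mathbb{R}}\{e_1,e_2\}$ and $\mathrm{span}_{\mathbb{R}}\{ie_1,ie_2\}$. The arguments in \cite{BN2014,Ni2013} proceed either via this kind of explicit construction or, more conceptually, by observing that the isotropy-irreducible cases ($SO(n+1)$, $G_2$, $Spin(7)$, $Spin(9)$, $Sp(m)Sp(1)$) are normal homogeneous hence naturally reductive, while the remaining cases require a direct check of the geodesic lemma $\langle[X,Y]_{\mathfrak{m}},X_{\mathfrak{m}}\rangle=0$ with the reducible isotropy decomposition. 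Your outline would be repaired by dropping the ``trivially on $\Pi^{\perp}$'' requirement and carrying out that check.
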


\section{Proofs of Theorem \ref{main-thm} and Theorem \ref{main-thm-2}}

Assume $(S^n,F)$ is a homogeneous Finsler sphere with $n>1$ and $K\equiv1$.

When $G=I_0(S^n,F)$, all possible homogeneous presentations $S^n=G/H$ are given by Table \ref{table-1}.
\begin{table}
  \centering
  \begin{tabular}{|c|c|c|c|c|}
     \hline
     No. & $n$ & $G$ & $H$ & $S^n$\\ \hline
     1   & $n>1$ & $SO(n+1)$ & $SO(n)$ & $SO(n+1)/SO(n)$\\ \hline
     2 & $n=2k-1>2$ & $U(k)$ & $U(k-1)$ & $U(k)/U(k-1)$  \\ \hline
     3 & $n=4k-1>2$ & $Sp(k)$ & $Sp(k-1)$& $Sp(k)/Sp(k-1)$\\ \hline
     4 & $n=4k-1>6$ & $Sp(k)U(1)$ &$Sp(k-1)U(1)$ & $Sp(k)U(1)/Sp(k-1)U(1)$ \\ \hline
     5 & $n=4k-1>6$ & $Sp(k)Sp(1)$ &$Sp(k-1)Sp(1)$&
     $Sp(k)Sp(1)/Sp(k-1)Sp(1)$ \\ \hline
     6 & $n=15$ & $Spin(9)$& $Spin(7)$ & $Spin(9)/Spin(7)$
     \\ \hline
   \end{tabular}
  \caption{Homogeneous spheres}\label{table-1}
\end{table}

Here are some remarks. In Case 1, the metric is Riemannian symmetric. This case covers $S^n=SO(n+1)/SO(n)$, $S^6=G_2/SU(3)$ and $S^7=Spin(7)/G_2$. In particular,
all even dimensional homogeneous spheres belong to this case. So we only
need to discuss the odd dimensional homogeneous spheres in later discussion.
For the $SU(k)$-homogeneous Finsler sphere
$(S^{2k-1},F)$ with $k>1$, it may be presented as $Sp(1)/Sp(0)$ when $k=2$, and it is $U(k)$-homogeneous when $k>2$.
In case 5, we identify $Sp(1)$ with the set of all quaternion numbers with norm one, acting on column vectors in $\mathbb{H}^k$ by right scalar multiplications. Then $G=Sp(k)Sp(1)$ represents the image of $Sp(k)\times Sp(1)$ in $SO(4k)$, such that $(A,\alpha)$ is mapped to the linear automorphism $x\mapsto Ax\alpha$ for each column vector $x\in\mathbb{H}^k$. Case 4 is similar to case 5.

Checking each case in Table \ref{table-1}, we observe that  $G=I_0(S^n,F)$ can be canonically identified as a closed subgroup of $SO(n+1)$, and meanwhile $S^n$ is identified as the unit sphere $$S^{n}(1)=\{x\in\mathbb{R}^{n+1}\mbox{ with }||x||=1\},$$
where $||\cdot||$ is the standard Euclidean norm,
such that the $G$-action on $S^n$ is induced by the left $SO(n+1)$-multiplications on column vectors.

Now on $S^n(1)$, we have two $G$-homogeneous Finsler metrics satisfying $K\equiv1$. One is the metric $F$, and the other is standard Riemannian metric $F_0$.
The following lemma indicates
their antipodal maps coincide.

\begin{lemma}\label{lemma-2}
Assume $G$ is a closed subgroup of $SO(n+1)$
acting transitively on the odd
dimensional unit sphere $S^n(1)$ with $n>2$, and $F$ is
a $G$-invariant Finsler metric on $S^n(1)$ with $K\equiv 1$
and $\psi^2=\mathrm{id}$, where $\psi$ is the antipodal map for $F$.
Then we have
$\psi(x)=-x$ for any $x\in S^n(1)$.
\end{lemma}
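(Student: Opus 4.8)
The strategy is to pin down the antipodal map from its commutation with $G$. Since $\psi$ lies in the center of $I(S^n,F)$ and $G\subset I(S^n,F)$ acts by isometries, $\psi$ commutes with every element of $G$, i.e.\ $\psi$ is a $G$-equivariant diffeomorphism of $S^n=S^n(1)$; by transitivity it is determined by the image of a single point. Fix $x_0\in S^n(1)$ and let $H=G_{x_0}$ be its isotropy subgroup. Equivariance forces $h\cdot\psi(x_0)=\psi(h\cdot x_0)=\psi(x_0)$ for all $h\in H$, so $\psi(x_0)$ lies in the fixed-point set of the linear $H$-action on $\mathbb{R}^{n+1}$; moreover $\psi(x_0)\neq x_0$, since $d_F(x_0,\psi(x_0))=\mathrm{diam}(S^n,F)=\pi>0$, whence $\psi\neq\mathrm{id}$. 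So the problem reduces to locating $\psi(x_0)$ inside the round sphere $Z:=\{v\in S^n(1): h\cdot v=v\text{ for all }h\in H\}$, which $\psi$ maps into itself.

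I would first replace $G$ by $I_0(S^n,F)$: the hypotheses of the lemma still hold (by the remark following Table~\ref{table-1}, $I_0(S^n,F)$ is one of the six standard linear subgroups of $SO(n+1)$, and $\psi$ is still central for the larger group), while the conclusion does not mention $G$. Inspecting the six presentations $S^n=G/H$ of Table~\ref{table-1}, the sphere $Z$ has one of three shapes. In Cases~1, 5 and 6 (that is, $SO(n+1)/SO(n)$, $Sp(k)Sp(1)/Sp(k-1)Sp(1)$ and $Spin(9)/Spin(7)$) the $H$-fixed subspace of $\mathbb{R}^{n+1}$ is exactly the line $\mathbb{R}x_0$, so $Z=\{x_0,-x_0\}$ and hence $\psi(x_0)=-x_0$ at once. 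In Cases~2 and 4 ($U(k)/U(k-1)$ and $Sp(k)U(1)/Sp(k-1)U(1)$) the $H$-fixed subspace is a complex line $\mathbb{C}x_0$, and in Case~3 ($Sp(k)/Sp(k-1)$) a quaternionic line $\mathbb{H}x_0$; thus $Z$ is a round circle, resp.\ a round $3$-sphere, on which $N_G(H)$ acts through a simply transitive action by $U(1)$, resp.\ $Sp(1)$. Since $\psi$ commutes with $N_G(H)$ and preserves $Z$, the restriction $\psi|_Z$ must be given by the action of a single element of $U(1)$, resp.\ $Sp(1)$; as $\psi^2=\mathrm{id}$ this element squares to $1$, and as $\psi\neq\mathrm{id}$ it is $\neq1$, so it is the unique involution $-1$, which sends $x_0$ to $-x_0$. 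In every case $\psi(x_0)=-x_0$, and then $\psi(g\cdot x_0)=g\cdot\psi(x_0)=-g\cdot x_0$ together with transitivity gives $\psi(x)=-x$ for all $x\in S^n(1)$.

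The main obstacle is this case-by-case step: one needs the Montgomery--Samelson classification \cite{MS1943} (equivalently, Table~\ref{table-1}) precisely to know that $Z$ is always $\{\pm x_0\}$, a great circle, or a great $3$-sphere, carrying a transitive scalar action by $\{\pm1\}$, $U(1)$ or $Sp(1)$. The genuinely Finsler subtlety sits in the circle and $3$-sphere cases: without the hypothesis $\psi^2=\mathrm{id}$ one would only learn that $\psi$ acts on $Z$ by some unit scalar $e^{i\theta_0}$, resp.\ unit quaternion $q_0$, which is exactly the behaviour of an antipodal map of order greater than $2$ (as occurs for non-Riemannian Randers spheres with $K\equiv1$); it is only $\psi^2=\mathrm{id}$ that collapses this scalar to $-1$. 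The remaining verifications, that $\psi$ preserves $Z$ and that the reduction to $G=I_0(S^n,F)$ is legitimate, are routine.
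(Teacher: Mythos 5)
Your proof is correct, but it takes a genuinely different route from the paper's. The paper argues metrically: it checks case by case in Table \ref{table-1} (No. 2--6) that $-I\in G$, so that $\psi_0(x)=-x$ is an isometry of $F$ and, being central in $G$ which acts transitively, a Clifford--Wolf translation; gluing a shortest $F$-geodesic from $x$ to $-x$ with its $\psi_0$-image then yields a prime closed geodesic, whose length is $2\pi$ by Lemma \ref{lemma-0} (this is where $\psi^2=\mathrm{id}$ enters), whence $d_F(x,-x)=\pi$ and $\psi(x)=-x$. You instead argue by pure equivariance and linear algebra: $\psi(x_0)$ lies in the $H$-fixed sphere $Z$, which the classification shows is $\{\pm x_0\}$, a great circle, or a great $3$-sphere carrying a transitive scalar action commuting with $\psi$, so $\psi|_Z$ is a right translation by an element squaring to $1$, hence by $-1$. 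Your mechanism is essentially the one the paper itself deploys later (Claim I in Section 5 and part (1) of Theorem \ref{main-thm-3}) for antipodal maps of order $m>2$ on $Sp(k)$-spheres; specializing it to $m=2$ unifies the two sections, avoids the geodesic-gluing and Clifford--Wolf machinery, and shows that in cases 1, 5, 6 the hypothesis $\psi^2=\mathrm{id}$ is not even needed, whereas the paper's argument uses it throughout via Lemma \ref{lemma-0} but requires no normalizer computation. Two small caveats: your reduction to $G=I_0(S^n,F)$ rests on the same lightly justified linearization remark following Table \ref{table-1} that the paper's own proof invokes (it could be bypassed by running your fixed-set argument directly for the remaining transitive linear groups $SU(k)$ and $Spin(7)$), and in the quaternionic case $\psi|_Z$ is a right translation rather than literally the action of a single element of the left-acting $Sp(1)$ --- neither affects the conclusion.
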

\begin{proof}
We may assume $G=I_0(S^n(1),F)$ and
only need to discuss the cases No. 2-6 in Table \ref{table-1}.
By Lemma \ref{lemma-0}, the assumption that $\psi^2=\mathrm{id}$ implies that all geodesics on $(S^n(1),F)$
are closed and all prime closed geodesics on $(S^n(1),F)$
have the same length $2\pi$.

We observe that for each case, the negative identity matrix $-I\in SO(n+1)$ belongs to $G$. For the cases No. 2-5, this fact is obvious.
For the case No. 6, the $Spin(9)$-action on $S^{15}(1)$ is induced by
the isotropy action for $F_4/Spin(9)$. Because $F_4/Spin(9)$
is a symmetric space with an involution $\sigma=\mathrm{Ad}(g)$
for some $g\in Spin(9)$. It implies that the isotropy action of $g$ satisfies $g\cdot x=-x$, $\forall x\in S^{15}(1)$. Another approach for the
case No. 6 is that we can identify the Euclidean space $\mathbb{R}^{16}$ as $\mathbb{O}^2$ and $Spin(9)$ as the subgroup of $SO(16)$ consisting of all elements which map Octonionic lines to Octonionic lines. Because $-I\in SO(16)$ satisfies this description, so we have $-I\in Spin(9)\subset SO(16)$. To summarize, we have proved that in each case $-I\in SO(n+1)$ is contained in $G$.

Denote $\psi_0(x)=-x$ the antipodal map for the standard Riemannian metric $F_0$ on the unit sphere $S^n(1)$.
Since $\psi_0\in C(G)$ and $G$ acts isometrically and transitively on $(S^n(1),F)$, $\psi_0$
is a Clifford Wolf translation for $F$. Take
any $x\in S^n(1)$ and any shortest geodesic $\gamma$ for $F$, from $x$ to $-x$, then
$\gamma\cup \psi_0(\gamma)$ is a prime closed geodesic, which length is $2\pi$. So the length of $\gamma$, from $x$ to $-x$, is $\pi$, for each $x\in S^n(1)$.
This ends the proof of the lemma.
\end{proof}

The coincidence between the antipodal maps suggests us to
compare $F$ and $F_0$. Then we get the following lemma.

\begin{lemma}\label{lemma-3}
Let $F$ be a $G$-invariant Finsler metric on $S^n(1)$ with $K\equiv1$ and $\psi^2=\mathrm{id}$, where
$G$ is a closed connected subgroup of $SO(n+1)$ which acts transitively on $S^n(1)$. Let $F_0$ be
the standard Riemannian metric on $S^n(1)$.
Then for any $x\in S^n(1)$ and any $v\in T_xS^n(1)$, we have $F(x,v)\geq F_0(x,v)$.
\end{lemma}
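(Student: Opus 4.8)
My plan is to exploit the common antipodal map provided by Lemma~\ref{lemma-2}. First, reduce to the genuinely non-Riemannian cases: if the isotropy subgroup at a point acts transitively on the Euclidean unit sphere of the tangent space — which is precisely Case~1 of Table~\ref{table-1} and covers every even-dimensional sphere — then the $G$-invariant Minkowski norm $F$ agrees with $F_0$ up to a constant, and $K\equiv1$ forces that constant to be $1$, so $F=F_0$ and the inequality is an equality. Thus I may assume $G$ is one of Cases No.~2--6 with $n>2$ odd, and invoke Lemma~\ref{lemma-2}: the antipodal map of $F$ is $\psi(x)=-x$. By Lemma~\ref{lemma-0} and the basic properties of $\psi$, every maximal $F$-unit-speed geodesic $c$ then has prime period $2\pi$, satisfies $c(t+\pi)=-c(t)$, and its restriction $c|_{[0,\pi]}$ is a shortest geodesic from $c(0)$ to $-c(0)$; in particular $d_F(x,-x)=\pi$ for all $x$.

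Second, reformulate the claim. Since $F$ and $F_0$ are positively $1$-homogeneous in the fibre, $F\ge F_0$ pointwise is equivalent to $d_F\ge d_{F_0}$: one direction by comparing the $F_0$- and $F$-lengths of an $F$-minimizing curve, the other by differentiating $d_F(x,\exp_{x}^{F}(tv))=tF(x,v)$ and $d_{F_0}(x,\exp_{x}^{F}(tv))=tF_0(x,v)+o(t)$ as $t\to0^+$. By $G$-homogeneity it suffices to fix $x_0$ and show $d_F(x_0,q)\ge d_{F_0}(x_0,q)$ for all $q$. Writing $q=c(t)$ along an $F$-minimizing unit-speed geodesic with $c(0)=x_0$, so $t=d_F(x_0,q)\in[0,\pi]$, and using $d_{F_0}(x_0,c(t))=\arccos\langle c(t),x_0\rangle$ on $S^n=S^n(1)\subset\mathbb{R}^{n+1}$, the statement becomes
\[
u(t):=\langle c(t),x_0\rangle\ \ge\ \cos t,\qquad t\in[0,\pi],
\]
for every $F$-unit-speed geodesic $c$ issuing from $x_0$. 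Both sides equal $1$ at $t=0$ and $-1$ at $t=\pi$ (here $c(\pi)=-x_0$), and differentiating $\|c\|^2\equiv1$ gives $u'(0)=0$ and $u''(0)=-\|\dot c(0)\|^2=-F_0(x_0,\dot c(0))^2$; so the first-order form of the inequality at $t=0$ is exactly $F_0(x_0,\dot c(0))\le1=F(x_0,\dot c(0))$, which is the lemma.

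The remaining, and hardest, step is to prove $u(t)\ge\cos t$; here $K\equiv1$ must enter essentially, because it cannot follow from $\psi^2=\mathrm{id}$ alone — there are smooth curves on $S^n(1)$ with $c(t+\pi)=-c(t)$ (hence only odd Fourier harmonics in each coordinate) for which $u(t)<\cos t$ near $0$, e.g. $u(t)=\tfrac{4}{9}\cos^3t+\tfrac{5}{9}\cos t$. I would run a comparison along the minimizing segment $c|_{[0,\pi]}$: since $K\equiv1$, the Riemann curvature $R_{\dot c}$ restricts to the identity on the $g_{\dot c}$-orthogonal complement of $\dot c$, so in a $g_{\dot c}$-parallel frame the Jacobi equation and the index form of $c|_{[0,\pi]}$ are exactly those of a great semicircle of the round sphere, the conjugate point occurs precisely at $t=\pi$, and — $c|_{[0,\pi]}$ being minimizing — the index form is positive semidefinite; inserting into the index inequality a variation of $c$ that sweeps the endpoint region toward $x_0$ should produce the displacement estimate $u(t)\ge\cos t$. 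I expect this to be the main obstacle, since the Finsler setting has no two-sided Rauch comparison and one must control the second-order behaviour of $\exp_{x_0}^{F}$, not merely conjugate distances; if a clean comparison proves elusive it can be replaced by a case-by-case analysis of the homogeneous spheres No.~2--6 of Table~\ref{table-1}, or by invoking known structural results on Finsler spheres of constant curvature and their antipodal maps. A useful auxiliary fact is the integral bound $\int_0^{2\pi}F_0(\dot c(t))^2\,dt\ge 2\pi$, obtained by applying Wirtinger's inequality to the odd-harmonic coordinate functions of $c$: it shows $F_0(\dot c(\cdot))$ averages to at least $1$, with equality only when $c$ is a round great circle traversed at unit speed.
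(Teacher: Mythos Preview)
Your proposal leaves the central step unproved: you reduce the lemma to the inequality $u(t)=\langle c(t),x_0\rangle\ge\cos t$ along an $F$-unit-speed geodesic $c$, but you do not establish this, and you yourself flag it as the main obstacle. The index-form idea you sketch controls Jacobi fields along $c$ with respect to $g_{\dot c}$, not the ambient Euclidean inner product $\langle\cdot,\cdot\rangle$ that defines $u$; there is no a~priori link between the two unless you already know something comparing $F$ to $F_0$, which is what you are trying to prove. A case-by-case analysis of Table~\ref{table-1} would not help either, since in Cases~2--6 the isotropy representation is reducible and $F$ is genuinely unconstrained on each tangent space.

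The paper's argument avoids all of this by reversing the roles of $F$ and $F_0$ and invoking a fact you never use: Lemma~\ref{lemma-1}, that the \emph{round} metric $F_0$ is $G$-geodesic-orbit for every transitive $G\subset SO(n+1)$. Given $v\in T_xS^n(1)$ with $F_0(x,v)=1$, take the $F_0$-geodesic $c(t)=\exp(tX)\cdot x$ through $x$ in direction $v$; since $c$ is the orbit of a one-parameter subgroup of $G$ and $G$ acts by $F$-isometries, $F(\dot c(t))$ is \emph{constant}, equal to $F(x,v)$. Then Lemma~\ref{lemma-2} gives $d_F(x,-x)=\pi$, and
\[
\pi\,F(x,v)=\int_0^\pi F(\dot c(t))\,dt\ \ge\ d_F(x,-x)=\pi,
\]
so $F(x,v)\ge1=F_0(x,v)$. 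The whole point is that an $F_0$-geodesic automatically has constant $F$-speed (via the g.o.\ property), whereas an $F$-geodesic has no reason to have controlled $F_0$-speed, which is exactly where your approach stalls.
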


\begin{proof}
Without loss of generality, we may assume $F_0(x,v)=1$.

Let $c(t)$ be the $F_0$-unit speed geodesic on $(S^n(1),F_0)$ such that $c(0)=x$, $c(\pi)=-x$ and $\dot{c}(0)=v$.  By Lemma \ref{lemma-1}, we can find $X\in\mathfrak{g}=\mathrm{Lie}(G)$, such that $c(t)=\exp tX\cdot x$. Because the $G$-actions are isometries for $F$, each integration curve of $X$ have a constant $F$-speed, so we have $F(\dot{c}(t))\equiv F(\dot{c}(0))=F(x,v)$. By Lemma \ref{lemma-2}, $d_F(x,-x)=\pi$, so we have
$$\pi F(x,v)=\int_0^\pi F(\dot{c}(t))dt\geq d_F(x,-x)=\pi,$$
i.e. $F(x,v)\geq 1=F_0(x,v)$,
which  proves this lemma.
\end{proof}

Theorem \ref{main-thm} follows Lemma \ref{lemma-3} easily.
\bigskip

\noindent
{\bf Proof of Theorem \ref{main-thm}.}
Assume conversely that $F\neq F_0$. By Lemma \ref{lemma-3},
there exist a point $x\in M$ and a $F$-unit tangent vector $v\in T_xS^n(1)$ such that $F_0(x,v)<1$. Let $c(t)$ be the $F$-unit
speed geodesic from $x$ to $\psi(x)=-x$, with $c(0)=x$, $c(\pi)=-x$ and $\dot{c}(0)=v$. Then by Lemma \ref{lemma-3} and out assumption that $F_0(x,v)<F(x,v)=1$, the $F_0$-length of $c(t)$ from $t=0$ to $t=\pi$ satisfies
$$\int_0^\pi F_0(\dot{c}(t))dt<
\int_0^\pi F(\dot{c}(t))dt=\pi.$$
This is a contradiction because $d_{F_0}(x,-x)=d_{F_0}(x,\psi_0(x))=\pi$.

This ends the proof of Theorem \ref{main-thm}.\ \rule{0.5em}{0.5em}

In the rest of this section, we sketch an alternative proof
of Theorem \ref{main-thm-2} which does not need \cite{KM2009}.
\bigskip

\noindent
{\bf Proof of Theorem \ref{main-thm-2}.}
Without loss of generality, we assume that $F$ is a Finsler metric
on $S^n(1)$ with $n>1$ and $K\equiv1$, such that its connected
isometry group is a closed connected subgroup of $SO(n+1)$.
By Theorem \ref{classification-g-o-spheres}, we may assume that there exists a closed connected subgroup $G\subset I_0(S^n(1),F)\subset SO(n+1)$ which acts transitively on $S^n(1)$ and is presented as
in Table \ref{table-1}, except No. 3.

For the cases No. 5 and No. 6, the homogeneous spheres $S^{n}(1)=Sp(k)Sp(1)/Sp(k-1)Sp(1)$ and $Spin(9)/Spin(7)$ are
weakly symmetric, so the
$G$-invariant metric $F$ is reversible \cite{Xu2018}. In
these cases we have $\psi^2=\mathrm{id}$ because for any $x\in M$,
$d_F(x,\psi(x))=d_F(\psi(x),x)=\pi$. By Theorem \ref{main-thm},
$F$ must be Riemannian (which is also Randers).

For the cases No. 2 and No. 4, $\mathfrak{g}$ has a one-dimensional center which provides Killing vector fields of
constant length on $(S^n(1),F)$.
As shown in Section 6 of \cite{Xu2018}, after a suitable Killing navigation defined by the datum
$(F,V)$ with $V\in\mathfrak{c}(\mathfrak{g})$,
we can get a homogeneous Finsler sphere $\tilde{F}$ with $K\equiv1$ and $\psi^2=\mathrm{id}$. By Theorem \ref{main-thm},
$\tilde{F}$ is Riemannian, so $F$ must be Randers.

To summarize, in each case we have proved that $F$ is Randers,
which ends the proof of Theorem \ref{main-thm-2}.\ \rule{0.5em}{0.5em}

\section{Behavior of the geodesics on a homogeneous Finsler sphere with $K\equiv1$}

In this section, we discuss the behavior of geodesics on a homogeneous Finsler sphere $(S^n,F)$ with $K\equiv1$.

{\bf The Riemannian case}.

When $F$ is Riemannian, i.e. it coincides with the standard Riemannian metric $F_0$ on $S^n(1)$. All geodesics
are closed, and all prime closed geodesics (i.e. the great circles) have the same length $2\pi$ and belong to the same $\hat{G}$-orbit.
Here the action of $\hat{G}=G\times U(1)$, where $G$ is the connected isometry group, on the space of all closed geodesics
$c(t)$ with $t\in\mathbb{R}/\mathbb{Z}$ is induced by that on the free loop space, i.e.
$G$ acts on the target Finsler manifold, and $U(1)$ rotates the parameter.

The known examples of closed Finsler manifold with only one orbit of prime closed geodesics are
compact rank-one symmetric spaces. This observation inspire us to
ask if they are the only ones. A partial answer for this rigidity problem from the positive side has been given in homogenous Finsler geometry (see Theorem 1.4 in \cite{Xu2018-2}).

{\bf The Randers case}.

When $F$ is non-Riemannian Randers, it is defined by the navigation process
with the datum $(F_0,V)$, in which $F_0$ is the standard Riemannian metric on $S^n(1)$, and $V$ is a nonzero Killing vector field \cite{BRS}. The homogeneity of $F$ requires that $V$ is of constant $F_0$-length. In this case $n=2k-1>2$ is an odd number,
the connected isometry group $G=I_0(S^n,F)=U(k)$, and $V$ is defined by a vector in $\mathfrak{c}(\mathfrak{g})$ with $\mathfrak{g}=u(k)$. By \cite{PM2018,HM2011},
The affect of the Killing navigation process on the geodesics can be explicitly described.

Notice that $\pm V$ are Killing vector fields of constant length for $F$, and they generates the center $S^1$ of $I_0(S^n,F)=U(k)$.
So each integration curve of $\pm V$ is a closed geodesic on $(S^n,F)$.
We denote $l_\pm$ the lengths of the prime closed geodesics generated by $\pm V$. It is well known that $l_{+}^{-1}+l_{-}^{-1}=\pi^{-1}$. To be more self contained, we propose a proof of it which do not require $F$ to be Randers and thus can be applied to later discussion.

\begin{lemma} \label{lemma-4}
Let $(S^n,F)$ be a Finsler sphere with $K\equiv1$.
Suppose that both $c(t)$ for $t\in[0,1]$ and $c(-t)$ for $t\in[-1,0]$ are prime closed geodesics with constant $F$-speeds, which lengths are denoted as $l_+$ and $l_-$ respectively. Then we have $l_+^{-1}+l_-^{-1}=\pi^{-1}$.
\end{lemma}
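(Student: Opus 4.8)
The plan is to read off the relation $l_+^{-1}+l_-^{-1}=\pi^{-1}$ from the single basic fact recalled in Section~2 (see \cite{Sh1996}): on a Finsler sphere with $K\equiv1$, every geodesic ray issuing from a point $p$ reaches $\psi(p)$ after $F$-arc length exactly $\pi$, and $\psi(p)$ is the unique point with $d_F(p,\psi(p))=\pi$, while $\psi(p)\neq p$. First I would extend $c$ to a $1$-periodic geodesic $c\colon\mathbb{R}\to S^n$. The constant-speed hypotheses then say precisely that $F(\dot c)\equiv l_+$, that $\bar c(t):=c(-t)$ is again a $1$-periodic geodesic with $F(\dot{\bar c})\equiv l_-$, and that for each $t_0$ the curve $\sigma\mapsto c(t_0-\sigma)=\bar c(\sigma-t_0)$ is the backward geodesic ray through $p=c(t_0)$, traversed with constant $F$-speed $l_-$.

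The heart of the argument is one identity. Fix $t_0$ and put $p=c(t_0)$. The forward ray $\tau\mapsto c(t_0+\tau)$ has constant $F$-speed $l_+$, hence $F$-arc length $l_+\tau$ after parameter $\tau$, so it meets $\psi(p)$ at $\tau=\pi/l_+$; thus $\psi(c(t_0))=c(t_0+\pi/l_+)$. Likewise the backward ray $\sigma\mapsto c(t_0-\sigma)$ has $F$-speed $l_-$, so it meets $\psi(p)$ at $\sigma=\pi/l_-$; thus $\psi(c(t_0))=c(t_0-\pi/l_-)$. Comparing the two, $c(t_0+\pi/l_+)=c(t_0-\pi/l_-)$ for every $t_0$, i.e. $c$ is invariant under the shift of its parameter by $\pi/l_++\pi/l_-$. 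Since $c$ is a \emph{prime} closed geodesic, its set of periods equals $\mathbb{Z}$ (equivalently, $1$ is its least period), so $\pi/l_++\pi/l_-$ is a positive integer $k$.

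It remains to see that $k=1$, which follows once we know $l_+>\pi$ and $l_->\pi$. To get $l_+>\pi$, suppose $l_+\le\pi$ and let $x=c(0)$; the forward ray from $x$ of arc length $\pi$ ends at $c(\pi/l_+)$, and by $1$-periodicity this point is joined to $x$ along $c$ by a forward arc of $F$-length $\pi-l_+\lfloor\pi/l_+\rfloor$, which (since $\lfloor\pi/l_+\rfloor\ge1$) is $\ge0$ and $<\pi$; this contradicts $\psi(x)\neq x$ together with $d_F(x,\psi(x))=\pi$. Hence $l_+>\pi$, and the same argument applied to $\bar c$ gives $l_->\pi$. Then $\pi/l_+<1$ and $\pi/l_-<1$, so $k=\pi/l_++\pi/l_-<2$, forcing $k=1$, which is precisely $l_+^{-1}+l_-^{-1}=\pi^{-1}$.

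I expect the one place needing care to be the passage through periodicity: a prime closed geodesic need not be an embedded circle, so one should not simply assert that $\psi$ acts on it as a rigid rotation by the fraction $\pi/l_\pm$ of the loop; working instead with the parametrized $1$-periodic geodesic $c$ and using that its group of periods is exactly $\mathbb{Z}$ is what makes the step rigorous. The inequalities $l_\pm>\pi$ form the other small point that must be argued rather than assumed, but the argument for them is short.
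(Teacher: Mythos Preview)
Your argument is correct and follows essentially the same idea as the paper's proof. The paper's version is simply the streamlined form of yours: it writes $\psi(c(0))=c(a)$ for some $a\in(0,1)$, reads off $a=\pi/l_+$ from the constant forward speed and $1-a=\pi/l_-$ from the constant backward speed, and adds. Your periodicity argument with the integer $k$ and the explicit verification that $l_\pm>\pi$ makes rigorous exactly the point the paper leaves implicit, namely that $a$ genuinely lies in the open interval $(0,1)$; otherwise the two proofs coincide.
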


\begin{proof}
Assume $c(a)$ for $a\in(0,1)$ is the image $\psi(c(0))$ for the
antipodal map $\psi$ of $(S^n,F)$. The arc length of $c(t)$ for $t\in[0,a]$ is $\pi$, while that for $t\in[0,1]$ is $l_+$. Because
$c(t)$ has a constant speed, we have $1/l_+=a/\pi$. For a similar reason, $1/l_-=(1-a)/\pi$. Adding these two equalities, then the lemma is proved.
\end{proof}

When the $F_0$-length of $V$ is an irrational multiple of $\pi$,
there are no other prime closed geodesics except those two $\hat{G}$-orbits
of prime closed geodesics generated by $\pm V$, which lengths $l_\pm$ are irrational multiples of $\pi$.
This is an important basic model for studying Finsler spheres with $K\equiv1$ and only finite orbits of prime closed geodesics \cite{Xu2018-2}.

When the $F_0$-length of $V$ is a rational multiple of $\pi$, the
antipodal map $\psi$ has a finite order $m>2$. All geodesics are closed.
The prime closed geodesics generated by $\pm V$ satisfies that their lengths $l_\pm$ are rational multiples of $\pi$, and
$l_\pm\in(\pi,m\pi]$. Because $1/l_++1/l_-=1/\pi$, the integration curves of $\pm V$ provides one or two $\hat{G}$-orbits of short prime closed geodesics. All other prime closed geodesics have the same length $m\pi$.

{\bf The non-Randers case}.

By Theorem \ref{main-thm} and Theorem \ref{main-thm-2}, the homogeneous Finsler sphere $(S^n,F)$ with $n>1$ and $K\equiv1$
may be non-Randers only when there exists a positive integer $k$, such that $n=4k-1$, $G=I_0(S^n,F)=Sp(k)$ and $S^n=Sp(k)/Sp(k-1)$.
Meanwhile, the antipodal map $\psi$ must have a finite order $m>2$.

As in the proof of Theorem \ref{main-thm}, we may identify the  homogeneous Finsler sphere $(S^n,F)$ as the unit sphere $S^{4k-1}(1)\subset\mathbb{H}^k$, on which we have the transitive
$Sp(k)$-action induced by the left $Sp(k)$-multiplications on column vectors in $\mathbb{H}^k$. Then the metric $F$ is defined on $S^{4k-1}(1)$ which is $Sp(k)$-invariant.

We will prove the following theorem in the next section, which
implies that many properties for the behavior of geodesics on
a homogeneous Randers Finsler sphere with $K\equiv1$ can be generalized to the non-Randers case. It is also a homogeneous
analog of Theorem 2 in \cite{BFIMZ}.

\begin{theorem}\label{main-thm-3}
Let $F$ be a $Sp(k)$-invariant Finsler metric on  $S^{4k-1}(1)$
such that it has constant
flag curvature $K\equiv1$ and the order of its antipodal map is a finite number $m>2$. Then we have the following:
\begin{description}
\item{\rm (1)} The antipodal map $\psi$ generates a subgroup $\mathbb{Z}_m$ in the right $Sp(1)$-multiplications on
    $S^{4k-1}(1)\subset \mathbb{H}^k$. The metric $F$ is homogeneous with respect to the action of $Sp(k)\mathbb{Z}_m\subset Sp(k)Sp(1)\subset SO(4k)$.
\item{\rm (2)} There exists a $Sp(k)$-invariant vector field $V$ on $S^{4k-1}(1)$, such that the integration curves of $\pm V$ are the only closed geodesics for both $F$ and the standard Riemannian metric $F_0$ on $S^{4k-1}(1)$.
\item{\rm (3)} Denote $l_\pm$ the lengths of the prime closed geodesics generated by $\pm V$, then $l_{\pm}\in (\pi,m\pi]$ are rational multiples of $\pi$, where $m>2$ is the order of
    the antipodal map for $F$. In particular, we have $l_+^{-1}+l_-^{-1}=\pi^{-1}$.
\item{\rm (4)} All geodesics are closed and all prime closed geodesics which are not integration curves of $\pm V$ have
    the same length $m\pi$.
\end{description}
\end{theorem}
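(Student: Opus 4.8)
The plan is to prove the four items in order, the common engine being that $\psi$ lies in the centre of $I(S^{4k-1},F)$ and that every geodesic through a point contains the whole $\psi$-orbit of that point.

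\textbf{Item (1).} Since $\psi$ is central in $I(S^{4k-1},F)$ and $Sp(k)\subset I(S^{4k-1},F)$, the map $\psi$ commutes with the transitive $Sp(k)$-action, whose isotropy at $x_0=(1,0,\dots,0)$ is $Sp(k-1)$. I would first record the general fact that the centralizer of a transitive $G$-action in $\mathrm{Diff}(G/H)$ is $N_G(H)/H$ acting by right translations, then compute $N_{Sp(k)}(Sp(k-1))=Sp(1)\times Sp(k-1)$, the $Sp(1)$-block acting on the quaternionic line fixed by $Sp(k-1)$; hence the centralizer of the $Sp(k)$-action on $S^{4k-1}(1)$ is exactly the group of right scalar multiplications $x\mapsto xq$, $q\in Sp(1)$. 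Therefore $\psi$ is $x\mapsto xq_\psi$ for some $q_\psi\in Sp(1)$, and $q_\psi$ has order $m$, so it generates a cyclic subgroup $\mathbb{Z}_m\subset Sp(1)$. As $F$ is invariant under $Sp(k)$ and under $\psi$, it is invariant under $Sp(k)\mathbb{Z}_m\subset Sp(k)Sp(1)\subset SO(4k)$.

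\textbf{Item (2).} Write $q_\psi=e^{\theta u}$ with $u\in\mathrm{Im}\,\mathbb{H}$, $|u|=1$, $\theta\in(0,\pi)$ (a quaternion of order $m>2$ is conjugate to a rotation through $2\pi p/m$ with $\gcd(p,m)=1$, $p<m/2$), and let $V$ be the $Sp(k)$-invariant vector field $V(x)=x\cdot u$; its integration curves are the great circles $C_x=\{x\,e^{tu}:t\in\mathbb{R}\}$, hence $F_0$-geodesics. The ``only'' half is then immediate: a curve that is a closed geodesic of both $F$ and $F_0$ is a great circle, and through any of its points $x$ it contains the $\psi$-orbit $\{x\,q_\psi^{\,i}\}_i$, which is a set of $m\ge3$ distinct points all lying on $C_x$; since two distinct great circles of $S^{4k-1}(1)$ meet in at most two points, the curve is $C_x$, an integration curve of $\pm V$. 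The real work is to show each $C_x$ is an $F$-geodesic. By homogeneity it suffices to treat $x_0$: the $F$-geodesic $\gamma$ through $x_0$ in the direction $x_0u$ is fixed by the isotropy $Sp(k-1)$ (which fixes both $x_0$ and $x_0u$), so it lies in $\mathrm{Fix}(Sp(k-1))=S^3$ (the unit quaternions in the first coordinate) and is a geodesic of the totally geodesic submanifold $(S^3,F|_{S^3})$. This submanifold is a homogeneous Finsler $3$-sphere (under left $Sp(1)\subset Sp(k)$) with $K\equiv1$ whose antipodal map is $x\mapsto xq_\psi$, again of order $m>2$; by Theorem~\ref{main-thm} it is not Riemannian, and by Theorem~\ref{classification-g-o-spheres} together with Theorem~\ref{main-thm-2} it is Randers unless its identity isometry group is exactly $Sp(1)$. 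In the Randers case $F|_{S^3}$ is a Killing navigation of the round $S^3$ whose navigation field is, up to scale, $V|_{S^3}$, so $C_{x_0}$ is one of its closed geodesics, hence an $F$-geodesic; applying $Sp(k)$ gives all the $C_x$, and the same argument applied to $q_\psi^{-1}$ (equivalently to $-u$) shows the reversed circles are $F$-geodesics too. \textbf{The main obstacle is the residual possibility} that $F|_{S^3}$ is a non-Randers left-invariant Finsler metric on $S^3$ with $K\equiv1$; here I would argue directly inside $S^3$ --- using that $\gamma$ still contains the $m$-point, equally spaced $\psi$-orbit on the great circle $C_{x_0}$ and that $\psi$ remains central in the isometry group --- to force $\gamma=C_{x_0}$, or else invoke the expected classification of constant-curvature left-invariant Finsler metrics on $S^3$.

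\textbf{Items (3) and (4).} Let $\gamma$ be any $F$-unit geodesic with $\gamma(0)=x$. The Clifford--Wolf property of $\psi$ gives $\psi(\gamma(s))=\gamma(s+\pi)$ for all $s$ (see \cite{Xu2018-3,BFIMZ}), so $\gamma(s+m\pi)=\psi^m(\gamma(s))=\gamma(s)$ and $\dot\gamma(s+m\pi)=d\psi^m(\dot\gamma(s))=\dot\gamma(s)$: every geodesic is closed with length dividing $m\pi$. Thus every prime closed geodesic has length $m\pi/N$ for an integer $N\ge1$, and $N\le m-1$, since such a geodesic through $x$ must contain $\psi(x)$ at $F$-arc-length at least $d_F(x,\psi(x))=\pi$ along a proper sub-arc, forcing length $>\pi$. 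Applying Lemma~\ref{lemma-4} to $C_x$ and its reverse --- both $F$-geodesics by Item~(2) --- yields $l_\pm=m\pi/N_\pm$ with $N_\pm\in\{1,\dots,m-1\}$ and $N_++N_-=m$, i.e. $l_\pm\in(\pi,m\pi]$ are rational multiples of $\pi$ with $l_+^{-1}+l_-^{-1}=\pi^{-1}$; this is Item~(3). For Item~(4), a prime closed geodesic $\sigma$ that is not an integration curve of $\pm V$ is, by the ``only'' half of Item~(2), not a great circle; it satisfies $\sigma(i\pi)=\psi^i(x)$ for $i=0,\dots,m-1$, which are $m$ distinct points, and if its length were $m\pi/N$ with $N\ge2$ then periodicity forces two of these points to coincide whenever $\gcd(m,N)>1$, while --- \textbf{the second delicate point} --- a finer comparison of the minimizing $\pi$-arcs $\sigma|_{[i\pi,(i+1)\pi]}$ rules out the remaining $N$ coprime to $m$, exactly as in the proof of Theorem~2 of \cite{BFIMZ}. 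Hence $N=1$ and $\sigma$ has length $m\pi$, together with the already established closedness of all geodesics.
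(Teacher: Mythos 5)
Your items (1) and (3) follow the paper's own route, but there is a genuine gap at the crux of item (2): proving that the circles $C_x=\{x e^{tu}\}$ are $F$-geodesics. Your plan reduces to the totally geodesic fixed-point sphere $S^3=\mathrm{Fix}(Sp(k-1))$, as the paper does, but then tries to conclude via the Randers alternative (Theorems \ref{classification-g-o-spheres} and \ref{main-thm-2}); that only works when the connected isometry group of $(S^3,F|_{S^3})$ is strictly larger than $Sp(1)$. The residual case you flag as ``the main obstacle'' --- a non-Randers $Sp(1)$-invariant metric on $S^3$ with $K\equiv1$ --- is exactly the situation the theorem is aimed at (for Randers $F$ the geodesic behavior is already known), and there is no ``expected classification'' to invoke: the paper explicitly notes that the existence of such non-Randers metrics is an open problem. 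The paper closes this gap with an idea missing from your proposal: by item (1), $F|_{S^3}$ is invariant not only under left $Sp(1)$ but under $Sp(1)\mathbb{Z}_m$, where $\mathbb{Z}_m$ is generated by $\psi$ acting as right multiplication by $\alpha$. Hence the isotropy group $\tilde{H}$ at $1\in S^3$ is a nontrivial finite cyclic group ($\mathbb{Z}_m$ or $\mathbb{Z}_{m/2}$, using $m>2$), which fixes a line $\mathfrak{m}_0\subset sp(1)$ and rotates the complementary plane $\mathfrak{m}_1$. For $0\neq v\in\mathfrak{m}_0$ and any $u$, invariance of the fundamental tensor gives $\langle v,[v,u]\rangle_v=\langle v,g\cdot[v,u]\rangle_v$ for all $g\in\tilde{H}$; summing over $g$ and using $\sum_{g\in\tilde{H}}g=0$ on $\mathfrak{m}_1$ yields $\langle v,[v,u]\rangle_v=0$, the criterion for $\exp(tv)$ to be a homogeneous geodesic in both directions (Claim II). This finite-isotropy averaging is what makes the circles $F$-geodesics without any Randers assumption.

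There is a second, smaller gap in item (4): you dismiss the prime lengths $m\pi/N$ with $1<N<m$ and $\gcd(N,m)=1$ only by appeal to ``a finer comparison \ldots as in \cite{BFIMZ}'', whose setting is $S^2$. The paper's argument is concrete: for a prime closed geodesic $c$ of length $l\in(\pi,m\pi)$, the $\psi$-orbit of $c(0)$ sits on the curve at parameters $\{l/m,2l/m,\ldots,l\}$, so some $\psi^j(c(0))=c(l/m)$ with $l/m<\pi$; choosing the sign of $V$ so that the arc of the (reversible) $\pm V$-circle through $c(0)$ reaching $\psi^j(c(0))$ avoids $\psi(c(0))$, that arc is also a minimizer of length $<\pi$, and uniqueness of minimizers between points at distance $<\pi$ forces $c$ to coincide with that circle. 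Thus every prime closed geodesic of length less than $m\pi$ is an integral curve of $\pm V$, which is the statement you need; note that this step again uses item (2), so the main gap above propagates into your item (4) as well.
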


\section{Proof of Theorem \ref{main-thm-3}}
As preparation, we first discuss
an $Sp(1)$-homogeneous Finsler sphere $(S^3(1),F)$ such that the flag curvature $K\equiv1$, and the antipodal map $\psi$ has a finite order $m>2$. The unit sphere $S^3(1)$, as well as $G=Sp(1)$, can be identified as the subset of quaternion numbers with norm one, and the $G$-action
on $S^3(1)$ is the left multiplication.
The metric $F$ is invariant under left multiplications of $Sp(1)$.

For any one-parameter subgroup $g_t\in Sp(1)$, the orbits of the left and right multiplications by $g_t$ on $S^3(1)$ are great circles. So any
$Sp(1)$-homogeneous geodesic on $(S^3(1),F)$ is a great circle.

Let $\psi$ be the antipodal map, and assume $\psi(1)=\alpha\in S^3(1)\subset\mathbb{H}$. Because $\psi$ commutes with all
the left $Sp(1)$-multiplications, we get
$\psi(q)=q\alpha$ for any
$q\in S^3(1)$.
Obviously $\alpha$ is a primitive $m$-root of $1$, i.e. $\alpha^m=1$ and $\alpha^i\neq1$ when $0<i<m$,
because the order of $\psi$ is $m$.

To summarize, we get

{\bf Claim I}. There exists a primitive $m$-th root of $1$, $\alpha\in Sp(1)$, such that $\psi(q)=q\alpha$ for any $q\in S^3(1)$.


By Claim I, $\psi$ generates a cyclic subgroup $\mathbb{Z}_m$ of right $\alpha^i$-multiplications for all $i\in\mathbb{Z}$, and the metric $F$ is homogeneous with respect to the action of $Sp(1)\mathbb{Z}_m\subset Sp(1)Sp(1)=SO(4)$.

Denote $\tilde{G}$ this subgroup $Sp(1)\mathbb{Z}_m\subset SO(4)$, its isotropy subgroup
$\tilde{H}$ at $1\in S^3(1)$ is isomorphic to $\mathbb{Z}_m$ when $m$ is odd, or $\mathbb{Z}_{m/2}$ when $m$ is even. By the assumption $m>2$, $\tilde{H}$
is always nontrivial. The isotropy action of $\tilde{H}$ splits the Lie algebra $\mathfrak{g}=sp(1)=\mathrm{Im}\mathbb{H}$ as a linear direct sum
$\mathfrak{g}=\mathfrak{m}_0+\mathfrak{m}_1$, which is orthogonal with respect to the Killing form,
so that $\tilde{H}$ acts trivially on the one dimensional subspace $\mathfrak{m}_0$ and rotates the two dimensional subspaces
$\mathfrak{m}_1=[\mathfrak{m}_0,\mathfrak{g}]$.

Let $v$ be any nonzero vector in $\mathfrak{m}_0$. Then for any $g\in\tilde{H}$, any $u\in\mathfrak{g}$,
we have $[v,u]\in\mathfrak{m}_1$ and
\begin{equation}\label{0000}
\langle v,[v,u]\rangle_v=\langle g\cdot v,g\cdot [v,u]\rangle_{g\cdot v}=\langle v,g\cdot [v,u]\rangle_v.
\end{equation}
Take the sum of (\ref{0000}) for all $g\in \tilde{H}$, and apply the equality $\sum_{g\in\tilde{H}}g=0$,
we get $\langle v,[v,u]\rangle_v=0$ which implies that the one-parameter subgroup $\exp tv$ generated by $v$ is
an $Sp(1)$-homogeneous geodesic on $(S^3(1),F)$.

To summarize, we get

{\bf Claim II}. Any nonzero vector $v\in\mathfrak{m}_0$ generates
a one parameter subgroup $\exp tv\subset Sp(1)$ which is a geodesic on $(S^3(1),F)$ for both directions.

Denote $V$ the vector field defined by
$V(q)=qv$, $\forall q\in S^3(1)$, for any nonzero $v\in\mathfrak{m}_0$.
Because $V$ is
left invariant and $\alpha\in e^{\mathbb{R}v}$, it is easy to see that $\psi$ is contained in the flow generated by $V$, and the integration curves of $V$ provide two orbits of homogeneous geodesics.

As point sets, there exists no other homogeneous geodesics on $(S^3(1),F)$. The reason is the following.
Any homogeneous geodesics on $(S^3(1),F)$ is a great circle.
If it contains some $q\in Sp(1)$, it contains the $\psi$-orbit of $q$, which has more than two points. So the great circle passing them is unique.

Now we are ready to discuss the general case and prove Theorem \ref{main-thm-3}.

\noindent{\bf Proof of Theorem \ref{main-thm-3}}.

(1) Denote $x_0=(0,\ldots,0,1)^T\in S^{4k-1}(1)\in \mathbb{H}^k$, the isotropy subgroup $H=Sp(k-1)$ corresponds to the left-up $(k-1)\times(k-1)$-block in
$G=Sp(k)$. As $\psi$ commutes with $G$, i.e. the left $Sp(k)$-multiplications, each point $\psi^k(x_0)$ in the $\psi$-orbit of $x_0$ is fixed by $H$. So $\psi(x_0)$ is contained
in the fixed point set
$$\mathrm{Fix}(H,S^{4k-1}(1))=S^3(1)
=\{(0,\ldots,0,q)^T|\,\forall q\in\mathbb{H}\mbox{ with }|q|=1\}.$$
We may assume $\psi(x_0)=(0,\ldots,0,\alpha)^T$, then the $Sp(k)$-invariance of $\psi$ implies $\psi(x)=x\alpha$ for any column vector $x\in
S^{4k-1}(1)\in\mathbb{H}^k$. Meanwhile, we see $\alpha\in Sp(1)\in\mathbb{H}$ is a primitive $m$-th root of $1$.
So $\psi$ generates a subgroup $\mathbb{Z}_m$ of the
right scalar multiplications, and the metric $F$ is
homogeneous with respect to the action of $Sp(k)\mathbb{Z}_m
\subset Sp(k)Sp(1)\subset SO(4k)$.

This proves (1) of the theorem.

(2) The fixed point set $S^3(1)=\mathrm{Fix}(H,S^{4k-1})$
is totally geodesic $Sp(1)$-homogeneous submanifold in $(S^{4k-1}(1),F)$, so it has constant flag curvature $K\equiv1$,
and its antipodal map coincides with the restriction of $\psi$,
which has the same finite order $m>2$. Claim II provides a vector field $V((0,\ldots,0,q)^T)=(0,\ldots,0,qv)^T$ for some $v\in sp(1)$, which integration curves are homogeneous geodesics in both directions. The vector field $V$ can be naturally extended by left $Sp(k)$-invariance to $S^{4k-1}(1)$, such that
$V(x)=xv$ for any $x\in S^{4k-1}(1)\in\mathbb{H}^k$, $\psi$ is contained in the flow generated by $V$, and
all integration curves of $\pm V$ are geodesics for both $F$ and the
standard Riemannian metric $F_0$.

Because the great circle containing any $\psi$-orbit is unique,
there does not exist any other geodesics on $S^{4k-1}(1)$ for both $F$ and  $F_0$.

This ends the proof of (2).

(3) Let $l_\pm$ be the lengths of the prime closed geodesics
generated by $\pm V$, where $V$ is the nonzero left invariant
vector field in (2). Because the order of $\psi$ is $m>2$,
and $\psi$ is a Clifford--Wolf translation, any geodesic segment of length $m\pi$ on $(S^{4k-1}(1),F)$ is a closed geodesic.
So we have $l_\pm\leq m\pi$. On the other hand, $l_\pm>\pi$ is obvious.
By Lemma \ref{lemma-4}, we get the equality
$l_+^{-}+l_-^{-1}=\pi^{-1}$.

This proves (3) of Theorem \ref{main-thm-3}.

(4) Assume that $c(t)$ with $t\in\mathbb{R}$ is an $F$-unit speed geodesic, which provides a prime closed geodesic of length $l\in(\pi,m\pi)$. Then we only need to prove $c(t)$ is an integration curve of the vector field $V$ (or $-V$) in (2).

Obviously we have
$\psi^i(c(t))=c(t+i\pi)$ and $c(t)=c(t+jl)$ for any $i,j\in\mathbb{Z}$ and $t\in\mathbb{R}$.
Because $c(t)$ for $t\in[0,m\pi]$ is also a closed geodesic, we have $m\pi=m'l$ for some positive integer $m'$. The integers $m$ and $m'$ must be co-prime to each other, otherwise by the homogeneity of $F$, the order of $\psi$ will be smaller than $m$.

Denote
$t_i=i\pi-\lfloor i\pi/l\rfloor l$ for $i\in\{1,\ldots,m\}$, in which $\lfloor a\rfloor$ for $a\in\mathbb{R}$
is the largest integer smaller than $a$. Because the $\psi$-orbit $\psi^i(c(0))=c(t_i)$ for $1\leq i\leq m$ contains exactly $m$ points, we have
$\{t_1,\ldots,t_m\}=\{l/m,2l/m,\ldots,l\}$. Because $l<m\pi$ and $t_m=l$, we can find some integer $j$, $1<j<m$, such that $t_j=l/m<\pi$. The geodesic $c(t)$ for $t\in[0,l/m]$ is the shortest geodesic from $c(0)$ to $\psi^j(c(0))$.

Now we switch to the $Sp(k)$-invariant vector field $V$
in (2). Then the integration curve of $V$ passing $c(0)$ is
a reversible geodesic containing the $\psi$-orbit of $c(0)$, with positive constant $F$-speed for both directions. We can suitably choose $V$ or $-V$, such that its integration curve from $c(0)$ to $\psi^j(c(0))$ does not pass $\psi(c(0))$. Then it is the shortest geodesic from $c(0)$ to
$\psi^j(c(0))$ and $d_F(c(0),\psi^j(c(0)))<\pi$.

Because $d_F(c(0),\psi^j(c(0)))<\pi$, the shortest geodesic from $c(0)$ to $\psi^j(c(0))$ is unique. So as point sets,
the geodesic $c(t)$ coincides with an integration curve of $\pm V$.

This proves (4) in the theorem.
\ \rule{0.5em}{0.5em}

{\bf Acknowledgement.} The author sincerely thank Vladimir S. Matveev, Yuri G. Nikonorov and Wolfgang Ziller for helpful discussions and suggestions. This paper is supported by National Natural Science Foundation of China (No. 11821101, No. 11771331), Beijing Natural Science Foundation
(No. 00719210010001, No. 1182006), Capacity Building for Sci-Tech  Innovation -- Fundamental Scientific Research Funds (No. KM201910028021).

\end{document}